\documentclass[twocolumn]{article}
\usepackage[a4paper, margin=1.8cm]{geometry}

\usepackage{amsthm}
\usepackage{cite}
\usepackage{amsmath,amssymb,amsfonts}
\usepackage{algorithmic}
\usepackage{graphicx}
\usepackage{algorithm,algorithmic}
\usepackage{hyperref}

\usepackage{mathrsfs,mathtools}
\usepackage{tikz}
\usetikzlibrary{intersections, calc, arrows.meta,bending}
\usepackage[svgnames]{xcolor}

\usepackage{comment}

\newtheorem{theorem}{Theorem}
\newtheorem{lemma}{Lemma}

\newtheorem{proposition}{Proposition}

\newtheorem{assumption}{Assumption}
\newtheorem{remark}{Remark}

\newtheorem{result}{Result}

\newcommand{\sto}{ \rm{s.t.} }

\newcommand{\norm}[1]{\left\lVert #1 \right\rVert}
\newcommand{\R}{\mathbb{R}}
\newcommand{\xstar}{x^\star}
\newcommand{\etastar}{\eta^\star}
\newcommand{\lstar}{\lambda^\star}

\definecolor{green2}{rgb}{0.2, 0.75, 0.2}

\definecolor{orange2}{RGB}{255,165,0}
\definecolor{violet2}{rgb}{0.93, 0.51, 0.93}
\definecolor{green2}{rgb}{0.2, 0.75, 0.2}

\usetikzlibrary{plotmarks, fit}

\newcommand{\tikzref}[1][]{%
    \begin{tikzpicture}[baseline,yshift=0.3em]
        \draw [mark repeat=2,mark phase=2,#1]
        plot coordinates {
            (0cm,0cm)
            (0.25cm,0cm)
            (0.5cm,0cm)
        }; \node[fit=(current bounding box),inner xsep=0.1em]{};
    \end{tikzpicture}%
}

\begin{document}

\title{Global Convergence of Control-Based Lagrangian Flows\\for Non-Convex Optimization}
\date{}

\author{S. Pirrera\thanks{Corresponding author: email \texttt{simone.pirrera@polito.it}}, F. Ripa, D. Astolfi, V. Cerone, S. M. Fosson and D. Regruto\thanks{This is the extended version of the paper published as: 
S. Pirrera, F. Ripa, D. Astolfi, V. Cerone, S. M. Fosson, and D. Regruto, 
``Global Convergence of Control-Based Lagrangian Flows for Non-Convex Optimization,'' 
IEEE Control Systems Letters, 2026, doi: 10.1109/LCSYS.2026.3709153.}~\thanks{S. Pirrera, F. Ripa, V. Cerone, S. M. Fosson and D. Regruto are with the Dipartimento di Automatica e Informatica,
Politecnico di Torino; email: \texttt{\{name.surname\}@polito.it}.\\D. Astolfi is with Universit\'e Claude Bernard Lyon 1, CNRS, LAGEPP UMR 5007, 43 boulevard du 11 novembre 1918, F-69100, Villeurbanne, France; email: \texttt{daniele.astolfi@univ-lyon1.fr}.}~\thanks{S. Pirrera's work received funding from the European Union’s Horizon Europe programme under the Marie Sk\l{}odowska-Curie grant agreement No. 101276493.\\Research partially supported by ANR via ALLIGATOR project (ANR-22-CE48-0009-01).}
}

\maketitle

\begin{abstract}
    This paper studies the continuous-time dynamics generated by control-theoretic Lagrangian methods for equality-constrained optimization. In particular, we consider dynamics induced by proportional-integral and feedback linearization controllers, which have recently been proposed as alternatives to primal-dual gradient methods. Unlike global convergence results for these dynamics, which rely on strong convexity of the objective function or boundedness assumptions, we exploit the geometric structure induced by the constraints. Specifically, we show global exponential convergence for non-convex problems that satisfy a suitable convexity property when restricted to the constraint manifold.
\end{abstract}

\section{Introduction}\label{sec:IN}
Constrained optimization problems arise in a wide range of engineering applications. 
For instance, they are central in supply chain management~\cite{garcia2015supply}, machine learning~\cite{gallego2024constrained}, and system identification~\cite{nlid_arxiv25}.
In most cases, explicit solutions are not available, which motivates the use of iterative algorithms that drive the optimization variables toward the problem optimum.
Optimization algorithms are defined as discrete-time dynamical systems or are obtained by discretization of continuous-time ones~\cite{bin2024semiglobal,kelly2024interconnected}. The continuous-time analysis provides valuable insights into the algorithm's properties (such as stability, performance, and robustness), and can guide the design of novel discrete-time implementations~\cite{su2016differential}.A widely studied class of optimization dynamics is that of Lagrangian and primal-dual methods, which are derived from Lagrange multiplier theory. Among these, the primal-dual gradient dynamics (PDGD), has been extensively studied; see, e.g.,~\cite{fei10,cherukuri2016asymptotic,qu19}.

An alternative perspective is to use control-theoretic tools to design optimization dynamics with rigorous convergence and performance guarantees. This approach has been applied to unconstrained optimization via integral quadratic constraints~\cite{les16,scherer2025tutorial}. In constrained optimization, the controller design regulates Lagrange multipliers to drive the system to an equilibrium satisfying first-order optimality conditions. We refer to the flows generated by the resulting dynamics as control-based Lagrangian flows.

Among different control strategies,~\cite{cmo24} and~\cite{cerone_feedback_2024}
propose the use of proportional-integral (PI) and feedback linearization (FL) controllers, while~\cite{allibhoy_control-barrier-function-based_2024} employs control barrier functions
and \cite{hauswirth2020differentiability} investigates the use of anti-windup. In the context of time-varying optimization, the control approach has been adopted in~\cite{casti_control_2025} via robust control and in~\cite{astolfi_repetitive_2025} using repetitive control tools. Many studies leverage convexity assumptions to simplify convergence analysis and establish global results~\cite{zhang_constrained_2026}.

This work focuses on the dynamics arising from PI and FL control. The PI dynamics is studied in~\cite{cmo24} and~\cite{allibhoy_anytime_2025}, where global convergence is established under the assumption of a strongly convex objective function. Regarding the FL dynamics,~\cite{cmo24} proves global convergence for strongly convex objective functions and local stability of the minima in the non-convex case, while the recent contribution~\cite{zhang_constrained_2026} establishes global asymptotic convergence for non-convex problems under suitable boundedness assumptions. 

We aim to relax these assumptions by exploiting the geometric structure induced by the equality constraints. In particular, we establish global exponential convergence for a class of problems in which the objective function is not necessarily convex or bounded on the Euclidean space, but satisfies a suitable convexity property on the constraint manifold. This is the ambient-space counterpart of geodesic strong convexity, a notion central to Riemannian optimization~\cite{zhang2016first,boumal2023introduction}. 

Such a property appears in the analysis of optimization methods defined on Riemannian manifolds. Methods including Riemannian gradient descent guarantee convergence when the objective is geodesically convex on the manifold.
However, they operate directly on the manifold and do not naturally yield globally defined dynamical systems in Euclidean space. Similar convexity conditions are also encountered in the analysis of augmented Lagrangian methods (ALM), but are usually restricted to a neighborhood of the optimal solution in the case of nonlinear constraints; see, e.g.,~\cite{bertsekas1996constrained}.

\subsubsection*{Outline}
Section \ref{sec:PS} presents the problem formulation. Sections~\ref{sec:FL} and \ref{sec:PIalg} establish the global exponential convergence of the FL and PI dynamics, respectively, under the assumption of convexity on the constraint manifold. In Section \ref{sec:further_comments}, we investigate the relationship between PI, FL, ALM, and PDGD dynamics. Section \ref{sec:academic_es} presents a numerical example. Section \ref{sec:CON} concludes the paper.

\subsubsection*{Notation}
Given $x\in \R^n$, we denote by $\|x\|$ its Euclidean norm. Given a function $g: \R^n \to \R^m$, we denote by $\nabla_x^\top g(x) \in \R^{m \times n}$ the Jacobian matrix of $g$ and with $\nabla_x g(x)$ its transpose. Given a matrix $A\in \R^{n\times m}$, $\|A\|$ is the matrix norm induced by the Euclidean norm.

\section{Problem statement and Background}
\label{sec:PS}
\allowdisplaybreaks
Let $f: \R^n \to \R$ and $h : \R^n \to \R^m$ be smooth functions. We consider the equality-constrained optimization problem
\begin{equation} \label{eq:opt} \begin{aligned}
    & \min_{x \in \R^n} ~ f(x) \qquad {\sto} \quad  h(x) = 0.
\end{aligned} \end{equation}

\noindent Throughout this paper, we make the following assumption.
\begin{assumption}\label{ass:Jrank}
    The Jacobian matrix $\nabla_x^\top h(x) \in \mathbb{R}^{m \times n}$ is of full row rank and there exists a real constant $\underline{m} >0$ such that $\nabla_x^\top h(x) \nabla_x h(x) \succeq \underline{m} I$ for all $x \in \R^n$.
\end{assumption}
\noindent   
Assumption~\ref{ass:Jrank} is common in the constrained optimization literature; see, e.g., \cite{nocedal2006numerical,bin2024semiglobal,sch00}. It is met in many applications, including a broad class of optimal control and system identification problems~\cite{nlid_arxiv25}.

In this paper, we consider the problem of analyzing a class of continuous-time dynamical systems whose state trajectory converges to points $\xstar$ that satisfy the first-order optimality conditions of Problem~\eqref{eq:opt}. 
Specifically, we seek points $\xstar$ that satisfy the first-order optimality conditions, summarized by the following result.

\begin{result}[First-order conditions~\cite{lue_book,ber99,nocedal2006numerical}]
\label{lagr_th}
Let $\xstar \in \R^n$ be a local minimum of Problem~\eqref{eq:opt}, and let Assumption~\ref{ass:Jrank} hold. Then, there exists a unique Lagrange multiplier vector $\lstar \in \R^m$ such that
\begin{align}\label{FO}
    & \nabla_x f(\xstar) + \nabla_x h(\xstar)\lstar = 0, \quad h(\xstar)=0.
\end{align}
\end{result}

The continuous-time dynamics PDGD~\cite{fei10}, described by
\begin{equation}\label{eq:pdgd}
    \dot x = -\nabla_x f(x) - \nabla_x h(x) z, \qquad  \dot z = h(x),
\end{equation}
is widely adopted to compute the stationary points of Problem~\eqref{eq:opt}. However, global exponential convergence of~\eqref{eq:pdgd} is guaranteed only under the assumption that $f(x)$ is strongly convex on the entire ambient space $\R^n$ (see, e.g., \cite{qu19,davydov2025time,bin2024semiglobal}), while its trajectories may exhibit limit cycles \cite{lin2020gradient} or instability 
\cite{cmo24} (see also Sec.~\ref{sec:academic_es}) in non-convex settings.

From a control-theoretic perspective, \eqref{eq:pdgd} represents a feedback loop coupling the plant introduced in~\cite{cmo24}
\begin{equation}\label{eq:cmo_plant} 
    \dot x = -\nabla_x f(x) - \nabla_x h(x) \lambda, \qquad y = h(x),
\end{equation} 
with an integral controller driven by the constraint violation. Here, $\lambda \in \R^m$ is the control input and $y \in \R^m$ is the output; the integrator dynamics of $z$ guarantee that any closed-loop equilibrium satisfies the feasibility condition $h(x) = 0$.

In this paper, we show that the control-based continuous-time algorithms introduced in~\cite{cmo24} allow us to relax the requirement of global convexity on $f(x)$. Our global convergence analysis leverages a structural property of the optimization problem: there exists a suitable change of variables that removes the constraints and yields an equivalent strongly convex optimization problem.

To analyze the system dynamics within the constraint-induced 
coordinates, we introduce a global change of variables. Specifically, 
we suppose that there exists a smooth map $q: \mathbb{R}^n \to \mathbb{R}^{n-m}$ 
satisfying 
$Q(x) \doteq \nabla_x^\top q(x)\,\nabla_x q(x) \succeq \underline{q}\, I$, 
for some $\underline{q} > 0$ and all $x \in \R^n$, such that the map
\begin{equation} \label{eq:diffeo}
x \mapsto \xi = \begin{bmatrix} y \\ \eta \end{bmatrix} 
\doteq \Phi(x) = \begin{bmatrix} h(x) \\ q(x) \end{bmatrix},
\end{equation}
with $y \in \R^m$ and $\eta \in \R^{n-m}$, is a global diffeomorphism 
of $\R^n$. Within this setting, we formalize our structural convexity 
assumption on Problem~\eqref{eq:opt}.

\begin{assumption}\label{ass:convexity}
The feasible set $\Omega = \{x \in \R^n : h(x)=0\}$ is connected, and the $\tilde f: \R^{n-m} \to \R$, defined by $\tilde f(\eta) \doteq f(\Phi^{-1}(0,\eta))$, is $\rho_\eta$-strongly convex on $\R^{n-m}$. That is, there exists a constant $\rho_\eta > 0$ such that, for all $\eta_1, \eta_2 \in \R^{n-m}$,
\begin{equation*}
\tilde f(\eta_2) \ge \tilde f(\eta_1) 
+ \nabla_\eta^\top \tilde f(\eta_1)\, (\eta_2 - \eta_1) 
+ \frac{\rho_\eta}{2}\|\eta_2 - \eta_1\|^2.
\end{equation*}
\end{assumption}
\begin{remark}\label{rem:convex_ass_generality}
    Assumption~\ref{ass:convexity} is weaker than  
    convexity of Problem~\eqref{eq:opt} (i.e., convexity of $f(x)$ in $\mathbb{R}^n$ and affine constraints).  Under this assumption, Problem~\eqref{eq:opt} admits a unique global optimizer $x^\star \in \R^n$ satisfying the conditions in Result~\ref{lagr_th}.
\end{remark}

\section{Stability analysis of FL dynamics}\label{sec:FL}
This section uses Assumption~\ref{ass:convexity} to study the stability of the FL dynamics proposed in~\cite{cmo24}, i.e.,
\begin{subequations}\label{eq:fl-cmo-general}\begin{align}
    \dot x & = -\nabla_x f(x) - \nabla_x h(x) \lambda\\
    \lambda & = [\nabla_x^\top  h(x) \nabla_x h(x)]^{-1}(-\nabla_x^\top h(x) \nabla_x f(x) + \mathcal{G}(y)), \label{eq:fl-cmo_lambda}
\end{align}\end{subequations}
where $\mathcal{G}: \R^{m} \rightarrow \R^{m}$ can be designed so that $\dot y = \mathcal{G}(y)$ is globally exponentially stable. 

Under the diffeomorphism~\eqref{eq:diffeo}, system \eqref{eq:cmo_plant} takes the form: 
\begin{subequations} \label{eq:normalform} \begin{align}
    \dot y & =\! \nabla_x^\top h(x) \dot x =\! - \nabla_x^\top h(x) \nabla_x f(x)\! -\! \nabla_x^\top h(x) \nabla_x h(x) \lambda \\
    \dot \eta &= \nabla_x^\top q(x) \dot x \!= -\nabla_x^\top q(x) \nabla_x f(x). \label{eq:zerodyn} 
\end{align}\end{subequations}
Eq.~\eqref{eq:zerodyn} follows from the fact that it is always possible to select $q(x)$ such that its Jacobian $\nabla_x^\top q(x)$ satisfies $\nabla_x^\top q(x) \nabla_x h(x)=0$ for all $x\in \R^n$, thus making the dynamics independent of the input $\lambda$, as shown in~\cite{isi95}. The following result holds.

\begin{theorem}[Zero dynamics stability]
\label{th:flcmo_ges}
    Under Assumption~\ref{ass:convexity}, $\eta^\star = q(x^\star)$ is a global exponentially stable equilibrium for the zero dynamics associated with~\eqref{eq:normalform}, i.e., there exists a real constant $c_\eta \geq 1$ such that, for all $t\geq0$ and all $\eta(0)\in \R^{n-m}$,
    \begin{equation}
        \norm{\eta(t)-\eta^\star} \leq c_\eta e^{- \underline{q}} \rho_\eta t \norm{\eta(0)-\eta^\star}.
    \end{equation}
\end{theorem}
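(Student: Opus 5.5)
The zero dynamics is the restriction of \eqref{eq:zerodyn} to the invariant set $y\equiv 0$, i.e. to the feasible manifold $\Omega$. On this set we have $x=\Phi^{-1}(0,\eta)$, so the zero dynamics reads
\[
\dot\eta = -\nabla_x^\top q(x)\nabla_x f(x)\big|_{x=\Phi^{-1}(0,\eta)}.
\]
The plan is to rewrite this vector field in terms of $\nabla_\eta\tilde f(\eta)$. We then show that it equals $-Q(x)\,\nabla_\eta \tilde f(\eta)$, where $Q(x)$ is symmetric with $Q(x)\succeq \underline q I$. Finally, we run a quadratic Lyapunov argument that uses the strong convexity from Assumption~\ref{ass:convexity}.

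\textbf{Step 1: the gradient identity.} Write $\Psi(\eta)\doteq\Phi^{-1}(0,\eta)$, and let $J(x)=\nabla_x^\top\Phi(x)=\begin{bmatrix}\nabla_x^\top h(x)\\ \nabla_x^\top q(x)\end{bmatrix}$. Since $\Phi\circ\Phi^{-1}=\mathrm{id}$, we have $J(\Psi(\eta))\,\nabla_\eta^\top\Psi(\eta)=\begin{bmatrix}0\\ I_{n-m}\end{bmatrix}$. This gives two facts:
\begin{itemize}
\item the columns of $\nabla^\top_\eta\Psi$ lie in $\ker\nabla_x^\top h$;
\item $\nabla_x^\top q\,\nabla_\eta^\top\Psi = I$.
\end{itemize}
The choice of $q$ made before the theorem gives $\nabla_x^\top q\,\nabla_x h=0$. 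Hence the rows of $\nabla_x^\top q$ span the orthogonal complement of $\mathrm{range}\,\nabla_x h$, which is $\ker\nabla_x^\top h$. Together with the two facts above, this yields the explicit formula
\[
\nabla_\eta^\top\Psi=\nabla_x q\,Q^{-1}, \qquad Q(x)=\nabla_x^\top q\,\nabla_x q .
\]
By the chain rule, $\nabla_\eta\tilde f(\eta)=\nabla_\eta\Psi\,\nabla_x f(x)=Q^{-1}\nabla_x^\top q\,\nabla_x f(x)$. Therefore
\[
\dot\eta=-Q(x)\,\nabla_\eta\tilde f(\eta), \qquad x=\Psi(\eta).
\]

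\textbf{Step 2: Lyapunov argument.} Take $V(\eta)=\tfrac12\|\eta-\eta^\star\|^2$. Here $\eta^\star=q(x^\star)$ is the unique minimizer of $\tilde f$, so $\nabla_\eta\tilde f(\eta^\star)=0$; this is consistent with Remark~\ref{rem:convex_ass_generality}. Then
\[
\dot V=-(\eta-\eta^\star)^\top Q(x)\,\nabla_\eta\tilde f(\eta).
\]
The obstacle is that $Q$ is a state-dependent matrix, so its product with the monotone gradient field is not automatically sign-definite. Two routes are available.
\begin{itemize}
\item \emph{Metric route (preferred).} Treat $\dot\eta=-Q\nabla\tilde f$ as a gradient flow of $\tilde f$ in the metric $Q^{-1}$. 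Then $\frac{d}{dt}\bigl(\tilde f(\eta)-\tilde f(\eta^\star)\bigr)=-\nabla\tilde f^\top Q\nabla\tilde f\le -\underline q\,\|\nabla\tilde f\|^2$. Strong convexity gives the Polyak--\L{}ojasiewicz inequality $\|\nabla\tilde f\|^2\ge 2\rho_\eta\bigl(\tilde f-\tilde f^\star\bigr)$, so $\tilde f-\tilde f^\star$ decays like $e^{-2\underline q\rho_\eta t}$. Combining with $\tilde f-\tilde f^\star\ge\tfrac{\rho_\eta}2\|\eta-\eta^\star\|^2$ converts this into a bound on $\|\eta-\eta^\star\|$ with rate $\underline q\rho_\eta$.
\item \emph{Direct route.} Alternatively, bound $\dot V$ via $(\eta-\eta^\star)^\top\nabla\tilde f(\eta)\ge\rho_\eta\|\eta-\eta^\star\|^2$, using $Q\succeq\underline q I$.
\end{itemize}

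\textbf{Main difficulty.} The converse upper bound on $\tilde f-\tilde f^\star$ in terms of $\|\eta(0)-\eta^\star\|^2$ requires Lipschitz continuity of $\nabla\tilde f$, or a bounded sublevel-set argument. This is where the constant $c_\eta\ge1$ comes from. Concretely, I would take $c_\eta=\sqrt{2(\tilde f(\eta(0))-\tilde f^\star)/(\rho_\eta\|\eta(0)-\eta^\star\|^2)}$, bounded using smoothness of $\tilde f$ on the relevant compact sublevel set. I expect justifying uniformity of $c_\eta$ over all initial conditions to be the main obstacle. If needed, I will assume (or derive from smoothness) a global Lipschitz constant $L_\eta$, giving $c_\eta=\sqrt{L_\eta/\rho_\eta}$.
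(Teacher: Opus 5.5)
Your Step 1 is the paper's computation (chain rule, the inverse Jacobian of $\Phi$, and $\nabla_x^\top q\,\nabla_x h=0$, giving $\dot\eta=-Q(p(\eta))\nabla_\eta\tilde f(\eta)$), and it is correct. The problems are in Step 2.

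First, drop the ``direct route''. For $n-m\ge 2$, the conditions $Q\succeq\underline q I$ and strong monotonicity of $\nabla_\eta\tilde f$ do \emph{not} imply $(\eta-\eta^\star)^\top Q\,\nabla_\eta\tilde f(\eta)\ge\underline q\rho_\eta\|\eta-\eta^\star\|^2$. The issue is that $Q$ is non-scalar, not that it is state-dependent. Take $h(x)=x_3$ and $q(x)=(x_1,10x_2)^\top$, so $Q=\mathrm{diag}(1,100)$ and $\underline q=1$. Let $f(x)=\tfrac12 u^\top A u$ with $u=(x_1,10x_2)^\top$ and $A=\begin{bmatrix}1&0.9\\0.9&1\end{bmatrix}$. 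Then $\tilde f(\eta)=\tfrac12\eta^\top A\eta$, $\rho_\eta=0.1$ and $\eta^\star=0$. At $\eta=(1,-0.45)^\top$ you get $\eta^\top QA\eta=-19.655$, so $\tfrac12\|\eta-\eta^\star\|^2$ \emph{increases} along the zero dynamics.

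This is exactly the ``strong monotonicity'' bound the paper applies to this flow in the proofs of Theorems~\ref{corollary:flcmo} and~\ref{th:picmo_ges}. In Theorem~\ref{th:flcmo_ges} itself, the paper only reduces to $\dot\eta=-Q\nabla_\eta\tilde f$ and cites contraction of gradient flows. So your suspicion of that step is justified. The bound is guaranteed only when $Q$ is a scalar multiple of the identity, e.g.\ $n-m=1$ as in Sec.~\ref{sec:academic_es}, where it gives $c_\eta=1$.

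Your metric route is sound and gives the rate. Along the flow, $\tilde f-\tilde f^\star$ decays like $e^{-2\underline q\rho_\eta t}$, hence $\|\eta(t)-\eta^\star\|\le e^{-\underline q\rho_\eta t}\sqrt{2(\tilde f(\eta(0))-\tilde f^\star)/\rho_\eta}$.

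However, the step you flag as the main obstacle is a genuine gap, not a technicality. To reach $c_\eta\|\eta(0)-\eta^\star\|$ with $c_\eta$ independent of $\eta(0)$, you need $\tilde f(\eta)-\tilde f^\star\le\tfrac{L_\eta}{2}\|\eta-\eta^\star\|^2$ for all $\eta$. This follows neither from Assumption~\ref{ass:convexity} nor from smoothness, since $C^\infty$ only yields Lipschitz constants on compact sets. The sublevel-set version makes $c_\eta$ depend on $\tilde f(\eta(0))$, which gives a semiglobal statement, not a global one.

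So, as written, you prove the theorem only under the extra hypothesis that $\nabla_\eta\tilde f$ is globally $L_\eta$-Lipschitz, in which case $c_\eta=\sqrt{L_\eta/\rho_\eta}$. You should state this as a hypothesis rather than promise to derive it. If $Q$ is constant, you can avoid it by using $V=\tfrac12(\eta-\eta^\star)^\top Q^{-1}(\eta-\eta^\star)$, which gives $c_\eta=\sqrt{\lambda_{\max}(Q)/\underline q}$. For state-dependent $Q$, however, this $V$ picks up a term from the time derivative of $Q^{-1}$ along the flow.

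The paper's argument does not supply the missing ingredient either. The global claim as stated therefore needs an extra assumption of this kind or a genuinely new argument.
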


\begin{proof}
    Define $p(\eta) = \Phi^{-1}(0,\eta)$ and
    $\tilde f(\eta) = f(p(\eta))$.
    We show that the zero dynamics of~\eqref{eq:normalform}, i.e., \begin{equation}\label{eq:zd_explicit}
        \dot\eta = -\nabla_x^\top q(x)\,\nabla_x f(x), \qquad x = p(\eta),
    \end{equation}
    is related to the negative gradient flow via $\dot\eta = -Q(p(\eta))\nabla_\eta \tilde f(\eta)$. 
    Then, since $Q(x) \succeq \underline{q} I$, Assumption~\ref{ass:convexity} implies global exponential stability
    of such a gradient flow with rate $\underline{q}\rho_\eta$ (see, e.g.,~\cite{bullo2022contraction}).
    We evaluate the gradient of $\tilde f(\eta) = f(p(\eta))$ by applying the chain rule: \begin{equation}\label{eq:chain_rule}
        \nabla_\eta \tilde f(\eta)
        = \nabla_\eta p(\eta)\,\nabla_x f(x)\big|_{x=p(\eta)}
    \end{equation}
    Let us differentiate $p(\eta)=\Phi^{-1}(0,\eta)$ with respect to $\eta$:    \begin{equation}\label{eq:th1_rowselect}
        \nabla_\eta^\top p(\eta)
        = \nabla_\xi^\top \Phi^{-1}(\xi)\big|_{\xi=(0,\eta)}
          \begin{bmatrix} 0 \\ I \end{bmatrix}.
    \end{equation}
    By the inverse function theorem applied to the global diffeomorphism $\Phi$, we obtain    
    \begin{align}\label{eq:th1_inv_f_th}
        & \nabla_\xi^\top \Phi^{-1}(\xi)
        = \bigl(\nabla_x^\top \Phi(x)\bigr)^{-1}
        = \begin{bmatrix} \nabla_x^\top h(x) \\ \nabla_x^\top q(x) \end{bmatrix}^{-1} = \\
        & ~ =
        \begin{bmatrix}
            \nabla_x h(x)\,(\nabla_x^\top h(x)\,\nabla_x h(x))^{-1} & \nabla_x q(x)\,Q(x)^{-1}
        \end{bmatrix}, \notag
    \end{align}
    where the last equality uses the orthogonality condition $\nabla_x^\top q(x)\,\nabla_x h(x)=0$, the definition of $Q(x)$, and the block-inverse formula.
    Combining Eqs.~\eqref{eq:th1_rowselect} and \eqref{eq:th1_inv_f_th} yields
    \begin{equation}\label{eq:grad_p}
        \nabla_\eta^\top p(\eta) = \nabla_x q(x)\,Q(x)^{-1}\big|_{x=p(\eta)}
    \end{equation}
    Finally, substituting~\eqref{eq:grad_p} into~\eqref{eq:chain_rule},
    \begin{equation}
        \nabla_\eta \tilde f(\eta)
        = Q(x)^{-1} \,\nabla_x^\top q(x)\,\nabla_x f(x)\big|_{x=p(\eta)}
    \end{equation}
    so~\eqref{eq:zd_explicit} reads $\dot\eta = -Q(p(\eta))\nabla_\eta\tilde f(\eta)$, concluding the proof.
\end{proof}

Using FL theory~\cite{isi95} and Theorem~\ref{th:flcmo_ges}, we can prove global exponential convergence of the FL dynamics in Eq.~\eqref{eq:fl-cmo-general} under the following additional assumptions.

\begin{assumption}\label{ass:G_ges}
    The origin of $\dot y = \mathcal{G}(y)$ is globally exponentially stable, i.e., there exist  $c_g \geq 1,\rho_g >0$ such that 
    \begin{equation}
        \norm{y(t)} \leq c_g e^{-\rho_g t} \norm{y(0)}, 
        \quad \forall y(0)\in \R^m, \forall\,  t\geq0. 
    \end{equation} 
\end{assumption}
Note that $\mathcal{G}$ can always be designed to satisfy this condition.

\begin{assumption}\label{ass:Lip_diffeo}
    The global diffeomorphism defined in Eq.~\eqref{eq:diffeo} and its inverse, $\Phi^{-1}: \R^n \rightarrow \R^n$, are globally Lipschitz with constants $L_\Phi$ and $L_{\Psi}$, respectively.
\end{assumption}

\begin{assumption}\label{ass:L1}
    Let $\tilde x = \Phi^{-1}(y,\eta)$ and $p = \Phi^{-1}(0,\eta)$. The function $\nabla_x^\top q(x) \nabla f(x)$ $: \R^n \rightarrow \R^{n-m}$ satisfies the following Lipschitz property for some real $L_1>0:$
\begin{equation}\label{eq:transverse_bound}
        \Big\|
        \nabla_x^\top q(\tilde x) \nabla_x f(\tilde x)
        -
        \nabla_x^\top q(p) \nabla_x f(p)
        \Big\|
        \le
        L_1 \norm{y}.
    \end{equation}
    \end{assumption}
    
\begin{theorem}\label{corollary:flcmo}
    Let Assumptions~\ref{ass:Jrank}--\ref{ass:L1} hold. Then, the FL 
    dynamics in Eq.~\eqref{eq:fl-cmo-general} is globally exponentially 
    stable.
    Specifically, for any rate $\rho_1 < \min\{\underline{q}\rho_\eta, \rho_g\}$, there exists a real
    $c_1 \geq 1$ such that,
    for all $x(0) \in \R^n$ and $t \geq 0$,
    \begin{equation}\label{eq:flcmo_bound}
        \|x(t)-\xstar\| \leq c_1\, e^{-\rho_1 t}\, \|x(0)-\xstar\|.
    \end{equation}
\end{theorem}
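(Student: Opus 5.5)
We will work in the coordinates $\xi=(y,\eta)=\Phi(x)$ and exploit the cascade structure that feedback linearization creates. Substituting the input \eqref{eq:fl-cmo_lambda} into \eqref{eq:normalform}, and using Assumption~\ref{ass:Jrank} (so the inverse exists), the $y$-equation collapses to $\dot y=\mathcal{G}(y)$. The $\eta$-equation \eqref{eq:zerodyn} does not depend on $\lambda$. Writing $\tilde x=\Phi^{-1}(y,\eta)$ and $p=\Phi^{-1}(0,\eta)$, the closed loop is
\begin{equation*}
\dot y=\mathcal{G}(y),\qquad \dot\eta=-\nabla_x^\top q(p)\nabla_x f(p)+d(t),
\end{equation*}
with $d(t)=-\nabla_x^\top q(\tilde x)\nabla_x f(\tilde x)+\nabla_x^\top q(p)\nabla_x f(p)$. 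By Assumption~\ref{ass:L1}, $\|d(t)\|\le L_1\|y(t)\|$. By Assumption~\ref{ass:G_ges}, $\|y(t)\|\le c_g e^{-\rho_g t}\|y(0)\|$. The nominal part is exactly the zero dynamics of Theorem~\ref{th:flcmo_ges}, so the task reduces to showing that an exponentially stable system driven by an exponentially decaying perturbation remains exponentially stable.

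To handle the $\eta$-subsystem we need a Lyapunov function rather than just the trajectory bound of Theorem~\ref{th:flcmo_ges}. The proof of that theorem shows $\dot\eta=-Q(p(\eta))\nabla_\eta\tilde f(\eta)$. A natural first choice is $V(\eta)=\tfrac12\|\eta-\etastar\|^2$. Its derivative along the nominal flow is $-(\eta-\etastar)^\top Q\nabla\tilde f$, which need not be bounded by $-\underline q\rho_\eta\|\eta-\etastar\|^2$ when $Q$ is not a scalar multiple of the identity. I would therefore use the converse Lyapunov theorem for globally exponentially stable systems (Theorem~\ref{th:flcmo_ges}). This requires the vector field $-\nabla_x^\top q(p(\eta))\nabla_x f(p(\eta))$ to be globally Lipschitz. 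That in turn follows from Assumptions~\ref{ass:Lip_diffeo}--\ref{ass:L1}, possibly after an added Lipschitz argument. The converse theorem yields $V$ with $c_a\|\eta-\etastar\|^2\le V\le c_b\|\eta-\etastar\|^2$, $\dot V\le -2\rho\, V$ for any $\rho<\underline q\rho_\eta$, and $\|\nabla V\|\le c_c\|\eta-\etastar\|$. With this $V$,
\begin{equation*}
\dot V\le -2\rho V + c_c\|\eta-\etastar\|L_1 c_g e^{-\rho_g t}\|y(0)\|.
\end{equation*}
Setting $W=\sqrt V$ gives the linear differential inequality $\dot W\le-\rho W + k e^{-\rho_g t}\|y(0)\|$. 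The comparison lemma then gives $W(t)\le e^{-\rho t}W(0)+k\|y(0)\|\,\frac{e^{-\rho t}-e^{-\rho_g t}}{\rho_g-\rho}$ (with a $te^{-\rho t}$ term if the rates coincide). Choosing any $\rho_1<\min\{\rho,\rho_g\}$ absorbs the polynomial factor, so $\|\eta(t)-\etastar\|+\|y(t)\|\le c\,e^{-\rho_1 t}(\|\eta(0)-\etastar\|+\|y(0)\|)$. This is why the statement asks for a strict inequality on $\rho_1$.

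Finally we translate back to $x$. Since $\xi^\star=\Phi(\xstar)=(0,\etastar)$, Assumption~\ref{ass:Lip_diffeo} gives
\begin{equation*}
\|x(t)-\xstar\|\le L_\Psi\|\xi(t)-\xi^\star\|\le L_\Psi c\sqrt2\,e^{-\rho_1 t}\|\xi(0)-\xi^\star\|\le L_\Psi L_\Phi c\sqrt2\,e^{-\rho_1 t}\|x(0)-\xstar\|.
\end{equation*}
This is \eqref{eq:flcmo_bound}. Global existence of solutions follows from these same bounds, since trajectories cannot escape in finite time. \textbf{The main obstacle} is the step producing a usable Lyapunov function for the zero dynamics with the sharp rate $\underline q\rho_\eta$ and a linearly bounded gradient. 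If the Lipschitz hypotheses needed for the converse theorem are not directly available, I would instead try the variable-metric function $\tilde f(\eta)-\tilde f(\etastar)$. Strong convexity together with $Q\succeq\underline q I$ gives $\dot{\tilde f}\le -2\underline q\rho_\eta(\tilde f-\tilde f^\star)$ by the Polyak--Łojasiewicz inequality. The perturbation term $\nabla\tilde f^\top Q^{-1}d$ is then controlled using the decay of $y$. This route needs an upper quadratic bound on $\tilde f$, which would have to come from the Lipschitz assumptions.
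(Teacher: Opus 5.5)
Your cascade reduction, the bound $\|d(t)\|\le L_1\|y(t)\|$, the comparison-lemma step and the return to $x$ through $L_\Psi$ and $L_\Phi$ all match the paper and are fine. The gap is in the $\eta$-subsystem.

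The converse Lyapunov theorem needs the zero-dynamics field $\eta\mapsto Q(p(\eta))\nabla_\eta\tilde f(\eta)$ to be globally Lipschitz. This does \emph{not} follow from Assumptions~\ref{ass:Lip_diffeo}--\ref{ass:L1}:
Assumption~\ref{ass:L1} only compares points with the same $\eta$, so it is purely transversal; Assumption~\ref{ass:Lip_diffeo} only bounds $\nabla_x q$; and strong convexity is only a lower curvature bound. For example, $h(x)=x_2$, $q(x)=x_1$, $f(x)=x_1^4+x_1^2$, $\mathcal G(y)=-y$ satisfy Assumptions~\ref{ass:Jrank}--\ref{ass:L1}. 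Here $\Phi$ is a coordinate swap and $\nabla_x^\top q\nabla_x f=4x_1^3+2x_1$ does not depend on $x_2$, yet $\dot\eta=-(4\eta^3+2\eta)$ is not globally Lipschitz.

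Your fallback has the same defect. Converting decay of $\tilde f-\tilde f^\star$ into $\|\eta(t)-\etastar\|\le c\,e^{-\rho t}\|\eta(0)-\etastar\|$ needs $\tilde f(\eta)-\tilde f^\star\le\tfrac L2\|\eta-\etastar\|^2$, which is not assumed and fails in this example.

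Two smaller points. First, the textbook converse construction does not give $\dot V\le-2\rho V$ for every $\rho<\underline q\rho_\eta$. You would need a weighted version such as $V(\eta)=\int_0^T e^{2\rho s}\|\phi(s,\eta)-\etastar\|^2ds$ with $\phi$ the nominal flow and $T$ large. Second, the cross term in the fallback is $\nabla_\eta\tilde f^\top d$, not $\nabla_\eta\tilde f^\top Q^{-1}d$. As written, you prove the theorem only under an extra growth hypothesis.

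The paper handles $\eta$ with exactly the quadratic $V=\tfrac12\|\eta-\etastar\|^2$ you discarded. It asserts $-(\eta-\etastar)^\top Q(p)\nabla_\eta\tilde f(\eta)\le-\underline q\rho_\eta\|\eta-\etastar\|^2$ from strong convexity and $Q(p)\succeq\underline qI$. It then applies Cauchy--Schwarz and Young with a small $\epsilon$ to get an ISS dissipation inequality, and concludes via the comparison lemma.

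Your objection to that bound is well founded. Take $\tilde f(\eta)=\tfrac12\eta^\top\mathrm{diag}(1,100)\eta$, constant $Q=\begin{bmatrix}1&-0.9\\-0.9&1\end{bmatrix}$ (realizable with $h(x)=x_3$ and linear $q$), and $\eta=(1,0.1)$. Then $\eta^\top Q\nabla_\eta\tilde f(\eta)=-7.09<0$.

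The bound is valid, however, whenever $Q(p)$ is a scalar multiple of the identity, in particular for $n-m=1$ as in the paper's example, and there it needs no growth condition. So in the scalar quartic example above, the paper's one-line argument works while your converse-theorem justification is unavailable; you should use the paper's argument in that case.

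For matrix-valued $Q$, neither the paper's bound nor your Lipschitz claim follows from Assumptions~\ref{ass:Jrank}--\ref{ass:L1}. The cheapest repair is to add a linear growth bound $\|\nabla_\eta\tilde f(\eta)\|\le L\|\eta-\etastar\|$; your Lipschitz hypothesis on the field implies it, since $Q\succeq\underline qI$. Under this bound your fallback closes directly: for $F=\tilde f-\tilde f^\star$, $\dot F\le-2(1-\epsilon)\underline q\rho_\eta F+\tfrac{L_1^2}{4\epsilon\underline q}\|y\|^2$. This yields every rate $\rho_1<\min\{\underline q\rho_\eta,\rho_g\}$ without any converse theorem.
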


\begin{proof}
    By Assumption~\ref{ass:Lip_diffeo}, since $x = \Phi^{-1}(y,\eta)$ and $\xstar = \Phi^{-1}(0,\eta^\star)$, we can bound the state error in terms of the normal form coordinates:
   \begin{equation}\label{eq:bound_x}
        \|x-\xstar\|
        \leq L_\Psi \left\|\begin{bmatrix} y \\ \eta-\eta^\star \end{bmatrix}\right\| \leq L_\Psi\bigl(\|y\| + \|\eta-\eta^\star\|\bigr) .
    \end{equation}
    Thus, it suffices to establish exponential decay of both $\|y(t)\|$ and $\|\eta(t)-\eta^\star\|$.
    First, by Assumption~\ref{ass:G_ges}, we have $\|y(t)\| \leq c_g e^{-\rho_g t} \|y(0)\|$.
    Next, we analyze the dynamics of $\eta$, which can be viewed as the zero dynamics of~\eqref{eq:cmo_plant} perturbed by a vanishing term $\delta(t)$, i.e.,
    \begin{equation}\label{eq:eta_perturbed}
        \dot\eta = -Q(p) \nabla_\eta \tilde f(\eta) + \delta(t),
    \end{equation}
    where $\tilde f(\eta)$ is defined in Theorem~\ref{th:flcmo_ges}, and the perturbation term is given by $\delta(t) \doteq \nabla_x^\top q(p) \nabla_x f(p) - \nabla_x^\top q(x) \nabla_x f(x)$ with $p = \Phi^{-1}(0,\eta)$.
    To establish the exponential convergence of~\eqref{eq:eta_perturbed}, we prove that it is input-to-state stable (ISS) with respect to the input $\delta(t)$, which decays exponentially to zero.
    Consider the ISS Lyapunov candidate $V(\eta) = \tfrac{1}{2}\|\eta-\eta^\star\|^2$. Using \eqref{eq:eta_perturbed}:
    \begin{equation}
        \dot V = -(\eta-\eta^\star)^\top Q(p) \nabla_\eta \tilde f(\eta) + (\eta-\eta^\star)^\top \delta(t).
    \end{equation}
    Recalling $\tilde f$ is $\rho_\eta$-strongly convex and that $\nabla_\eta \tilde f(\eta^\star) = 0$, and since $Q(p) \succeq \underline{q} I$, we use the strong monotonicity property $-(\eta-\eta^\star)^\top Q(p) \nabla_\eta \tilde f(\eta) \leq -\underline{q}\rho_\eta \|\eta-\eta^\star\|^2 = -2\underline{q}\rho_\eta V$ to bound the first term.
    Next, applying in sequence the Cauchy--Schwarz and Young's inequalities to the second term, we obtain:
    \begin{subequations}        
    \begin{align}
        \dot V &\leq -2\underline{q}\rho_\eta V + \|\eta-\eta^\star\|\|\delta(t)\|
                \label{eq:dotV_initial}\\
        & \leq (-2\underline{q}\rho_\eta+\epsilon) V + \frac{1}{2\epsilon}\|\delta(t)\|^2 \label{eq:dotV_iss}
    \end{align}
    \end{subequations}
    for any $\epsilon \in (0, 2\underline{q}\rho_\eta)$, which is the required ISS dissipation inequality;
    see~\cite{sontag2013mathematical}.
    For any desired convergence rate $\rho_1 < \min\{\underline{q}\rho_\eta, \rho_g\}$, selecting $\epsilon$ sufficiently small ensures that $2\underline{q}\rho_\eta - \epsilon > 2\rho_1$ and $2\rho_g > 2\rho_1$.
    By Assumptions~\ref{ass:G_ges} and~\ref{ass:L1}, $\delta(t)$ is exponentially bounded as:
    \begin{equation}\label{eq:th2_bnd_delta_v2}
        \|\delta(t)\|^2 \leq L_1^2 \|y(t)\|^2 \leq L_1^2 c_g^2 e^{-2\rho_g t} \|y(0)\|^2.
    \end{equation}
    Plugging Eq.~\eqref{eq:th2_bnd_delta_v2} in Eq.~\eqref{eq:dotV_iss} and applying the comparison lemma yields $V(t) \leq e^{-2\rho_1 t} \left( V(0) + M_\delta \|y(0)\|^2 \right),$ for some positive constant $M_\delta$.
    Taking the square root and using $\|\eta(t)-\eta^\star\| = \sqrt{2V(t)}$ we obtain the bound:
    \begin{equation}\label{eq:eta_bound_final}
        \|\eta(t)-\eta^\star\|
        \leq e^{-\rho_1 t}\! \left( \|\eta(0)-\eta^\star\| \!+ \!\sqrt{2M_\delta} \|y(0)\| \right).
    \end{equation}
    Finally, using the Lipschitz bounds from Assumption~\ref{ass:Lip_diffeo} on the initial conditions, $\|y(0)\| \leq L_h\|x(0)-\xstar\|$ and $\|\eta(0)-\eta^\star\| \leq L_q\|x(0)-\xstar\|$, we substitute~\eqref{eq:eta_bound_final} and the bound on $\|y(t)\|$ back into~\eqref{eq:bound_x}.
    This yields the desired result with $c_1 = L_\Psi (L_q + \sqrt{2M_\delta}L_h + c_g L_h)$.
\end{proof}

\begin{remark}
    Theorem~\ref{corollary:flcmo} quantifies the convergence rate of the FL dynamics directly in terms of the problem data. In particular, we establish that $\rho_1$ 
    is determined by the minimum of the two rates associated with the zero dynamics and the externally assigned y-dynamics induced by the choice of $\mathcal{G}$.
\end{remark}
A closely related analysis is addressed in the independent work~\cite{zhang_constrained_2026}.  While both works share similar technical assumptions regarding gradient boundedness (see Appendix~II), they differ fundamentally in their core premises and resulting guarantees. Specifically, \cite{zhang_constrained_2026} relies on the assumption that $f(x)$ is lower-bounded and establishes asymptotic convergence to a first-order KKT point, which in the non-convex setting, might be a local minimum. In contrast, our work leverages the structural property in Assumption~\ref{ass:convexity}. This distinction is crucial, as it enables our framework to guarantee global exponential convergence to the unique global minimum. An illustrative example of a problem that fulfills Assumption~\ref{ass:convexity}, but $f(x)$ is not lower-bounded, is detailed in Sec.~\ref{sec:academic_es}.

\section{Stability analysis of PI dynamics} \label{sec:PIalg}
This section studies the convergence of the PI dynamics proposed in~\cite{cmo24} and described by the following system
\begin{align}\label{eq:picmo_zform} 
    \dot x\! =\! -\nabla_x f(x) - \nabla_x h(x) \left(k_p h(x) + k_i z \right), \quad    \dot z\! = \!h(x),
\end{align} 
under Assumption~\ref{ass:convexity}. 

We conduct this analysis by studying the system in the normal-form coordinates introduced in Sec.~\ref{sec:FL}. We start by rewriting \eqref{eq:picmo_zform} as:
\begin{subequations}\label{eq:picmo_lamb_form} \begin{align*}
    \dot x\! &=\! -\nabla_x f(x) - \nabla_x h(x) \lambda \\
    \dot \lambda\! &=\! - k_p\left( \nabla_x^\top h(x) \nabla_x h(x) \lambda\!+ \!\nabla_x^\top h(x) \nabla_x f(x)\! + \!k h(x) \right),
\end{align*} \end{subequations}
where $k = k_i/k_p \in \R$.
Next, we consider the global diffeomorphism defined by Eq.~\eqref{eq:diffeo} and the coordinate transformation $w \doteq \lambda - k_p  y - \lstar$. Using these coordinates, we obtain 
\begin{equation}\label{eq:picmo_zd_form}\begin{aligned}
\dot{\eta} &= -\nabla_x^\top q(x) \nabla_x f(x)\\
\dot{y} &= -\nabla_x^\top h(x) \nabla_x f(x)\! -\! \nabla_x^\top h(x) \nabla_x h(x) (w + \lstar)+ \\
&\quad -\! k_p \nabla_x^\top h(x) \nabla_x h(x) y\\
\dot{w} &= k_p \,k \, y. 
\end{aligned}\end{equation}

We establish the convergence of \eqref{eq:picmo_zform} using Assumption~\ref{ass:convexity} and the following additional assumption.

\begin{assumption}\label{ass:lip_l}
The function $\varphi(x): \R^n \rightarrow \R^m$ defined by
\begin{equation}
    \varphi(x) \doteq -(\nabla_x^\top h(x) \nabla_x h(x))^{-1}(\nabla_x^\top h(x) \nabla_x f(x))
\end{equation}
is globally Lipschitz, i.e., there exists a real $L_2>0$ such that $\norm{ \varphi(x_1) - \varphi(x_2) } \leq L_2 \norm{x_1-x_2}$ for all $x_1,x_2 \in \R^n$.
\end{assumption}

\noindent The following lemma holds.

\begin{lemma}[Lipschitz continuity of the nonlinear term $r$]\label{lemm:bnd_r}
    Let $r(\eta,y): \R^{n-m} \times \R^m \rightarrow \R^m$ be defined by: 
    \begin{equation}
        r(\eta,y) \doteq \nabla_x^\top h(\tilde x) \nabla_x f(\tilde x)+\nabla_x^\top h(\tilde x) \nabla_x h(\tilde x) \lstar
    \end{equation}
    with $\tilde x = \Phi^{-1}([y^\top,\eta^\top]^\top)$. Then, there exist real constants $\ell_r^\eta,\ell_r^y>0$ such that, for all $y \in \R^m$ and $\eta \in \R^{n-m}$, it holds
    \begin{equation}\label{eq:r_bound}
    \norm{r(\eta,y)}
    \le
    \ell_r^\eta \norm{\eta-\etastar}
    +
    \ell_r^y \norm{y}.
    \end{equation}
\end{lemma}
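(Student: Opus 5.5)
Write $M(x)\doteq \nabla_x^\top h(x)\nabla_x h(x)$, which is invertible with $M(x)\succeq \underline{m} I$ by Assumption~\ref{ass:Jrank}. With $\varphi$ as in Assumption~\ref{ass:lip_l}, the nonlinear term factorizes as
\begin{equation*}
r(\eta,y) = M(\tilde x)\bigl(\lstar - \varphi(\tilde x)\bigr),
\end{equation*}
since $\nabla_x^\top h(\tilde x)\nabla_x f(\tilde x) = -M(\tilde x)\varphi(\tilde x)$. By Result~\ref{lagr_th}, left-multiplying the stationarity condition $\nabla_x f(\xstar)+\nabla_x h(\xstar)\lstar=0$ by $\nabla_x^\top h(\xstar)$ gives $\lstar = \varphi(\xstar)$. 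Hence $r(\eta,y) = M(\tilde x)\bigl(\varphi(\xstar)-\varphi(\tilde x)\bigr)$, and $r$ vanishes at $(\eta,y)=(\etastar,0)$. The plan is to bound this difference through the Lipschitz property of $\varphi$ and then control the factor $M(\tilde x)$.

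For the difference, Assumption~\ref{ass:lip_l} gives $\|\varphi(\tilde x)-\varphi(\xstar)\|\le L_2\|\tilde x-\xstar\|$. Since $\xstar=\Phi^{-1}(0,\etastar)$, Assumption~\ref{ass:Lip_diffeo} yields $\|\tilde x-\xstar\|\le L_\Psi(\|y\|+\|\eta-\etastar\|)$, exactly as in~\eqref{eq:bound_x}. Therefore
\begin{equation*}
\|r(\eta,y)\|\le \|M(\tilde x)\|\, L_2 L_\Psi\bigl(\|\eta-\etastar\|+\|y\|\bigr).
\end{equation*}

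The main obstacle is bounding $\|M(\tilde x)\|$ uniformly. Since $h$ is the first block of $\Phi$, it is globally Lipschitz with constant at most $L_\Phi$ by Assumption~\ref{ass:Lip_diffeo}. Consequently $\|\nabla_x h(x)\|\le L_\Phi$ for all $x$, and $\|M(x)\|\le L_\Phi^2$. This gives the claim with $\ell_r^\eta=\ell_r^y = L_\Phi^2 L_2 L_\Psi$, which can be enlarged if necessary so that both constants are strictly positive.

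The key step is therefore recognizing that Lipschitz continuity of $\Phi$ gives a uniform bound on the Jacobian of $h$. This is what converts the Lipschitz property of $\varphi$ into a bound on $r$ without requiring any global bound on $\nabla_x f$. If one wanted to avoid relying on Lipschitz continuity of $\Phi$, the fallback would be to assume a uniform upper bound on $M(x)$ directly, but the argument above does not need it.
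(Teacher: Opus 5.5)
Your proof is correct and follows the paper's argument. Both rewrite $r(\eta,y) = \nabla_x^\top h(\tilde x)\nabla_x h(\tilde x)\,[\varphi(\xstar)-\varphi(\tilde x)]$ using $\lstar=\varphi(\xstar)$, then combine the Lipschitz property of $\varphi$ (Assumption~\ref{ass:lip_l}) with that of $\Phi^{-1}$ (Assumption~\ref{ass:Lip_diffeo}). The paper leaves the uniform bound on $\|\nabla_x^\top h\,\nabla_x h\|$ implicit in the main text and posits a separate constant $\overline m$ in Appendix~I; you obtain it from Assumption~\ref{ass:Lip_diffeo}, since $\|\nabla_x^\top h(x)\|\le L_\Phi$, so one may take $\overline m = L_\Phi^2$ with no extra hypothesis.
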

\begin{proof} Using $\lstar = \varphi(\xstar)$ (see, e.g.,~\cite{ber99}) and the definition of $\varphi$ in Assumption~\ref{ass:lip_l}, the residual can be rewritten as $r(\eta,y) = \nabla_x^\top h(\tilde x) \nabla_x h(\tilde x)\, [\varphi(\xstar) - \varphi(\tilde x)]$; then Eq.~\eqref{eq:r_bound} follows from Assumptions~\ref{ass:Lip_diffeo} and~\ref{ass:lip_l}. \end{proof}
\noindent Using Lemma~\ref{lemm:bnd_r}, we can prove the main result of this section.
\begin{theorem}\label{th:picmo_ges}
Let Assumptions~\ref{ass:Jrank}, \ref{ass:convexity}, and~\ref{ass:Lip_diffeo}-\ref{ass:lip_l} hold. There exists a constant $k_p^\star$ such that, if $k_p > k_p^\star >0$ and $0< k_i < \underline{m} k_p^2$, then $(\xstar,\lstar)$ is a globally exponentially stable equilibrium of the dynamics~\eqref{eq:picmo_zform}, i.e., there exist real constants $c_\pi \geq 1,\rho_\pi>0$ such that
\begin{equation}
    \norm{\begin{bmatrix}
        x(t)-\xstar \\ \lambda(t)-\lstar
    \end{bmatrix}} \leq c_\pi e^{-\rho_\pi t}\norm{\begin{bmatrix}
        x(0)-\xstar \\ \lambda(0)-\lstar
    \end{bmatrix}}. 
\end{equation}
\end{theorem}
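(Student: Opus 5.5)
We work in the normal-form coordinates $(\eta,y,w)$ of~\eqref{eq:picmo_zd_form} and build a composite quadratic Lyapunov function. Since $\Phi$ and $\Phi^{-1}$ are globally Lipschitz (Assumption~\ref{ass:Lip_diffeo}) and $w=\lambda-k_p y-\lstar$ is a linear invertible change of variables, exponential decay of $\|(\eta-\etastar,y,w)\|$ transfers to $(x-\xstar,\lambda-\lstar)$ with some constant $c_\pi$. This is the same bookkeeping as at the end of the proof of Theorem~\ref{corollary:flcmo}.

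\textbf{Step 1: the $(y,w)$ subsystem.} With $M(x)\doteq\nabla_x^\top h(x)\nabla_x h(x)\succeq\underline m I$ and $r(\eta,y)$ as in Lemma~\ref{lemm:bnd_r}, the $(y,w)$ subsystem reads
\begin{equation*}
\dot y=-M(x)\,(w+k_p y)-r(\eta,y),\qquad \dot w=k_i y .
\end{equation*}
Without $r$, this is a damped second-order system with a time-varying positive definite gain $M$. The sign convention should be checked: $\dot w=k_p k y=k_i y$ together with $\dot y\ni -Mw$ gives the skew coupling $-My$ versus $+k_i y$. I would use a cross-term Lyapunov function
\begin{equation*}
W(y,w)=\tfrac12\|y\|^2+\tfrac{a}{2}\|w\|^2+b\,y^\top w ,
\end{equation*}
with $a,b$ chosen so that $W$ is positive definite. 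Because $M$ is not constant, $y^\top M w$ cannot be cancelled exactly by the $w$-equation. I would therefore choose $a$ and $b$ (e.g.\ $b$ of order $k_i/k_p$, $a$ of order $k_i$) so that $\dot W\le -c_1 k_p\|y\|^2-c_2\|w\|^2+(\text{terms in } r)$. The condition $k_i<\underline m k_p^2$ is exactly what makes the linearized characteristic behaviour (roots of $s^2+k_p\mu s+k_i\mu$, $\mu\ge\underline m$) overdamped and controllable by such a quadratic form. The cross terms $y^\top M w$ are handled by Young's inequality, using an upper bound on $\|M\|$. Such a bound follows from Lipschitzness of $\Phi$, since $\nabla_x h$ is bounded by $L_\Phi$.

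\textbf{Step 2: the $\eta$ subsystem.} As in the proof of Theorem~\ref{corollary:flcmo}, $V(\eta)=\frac12\|\eta-\etastar\|^2$ satisfies
\begin{equation*}
\dot V\le-2\underline q\rho_\eta V+L_1\|\eta-\etastar\|\,\|y\|,
\end{equation*}
by Theorem~\ref{th:flcmo_ges} and Assumption~\ref{ass:L1}.

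\textbf{Step 3: the composite function, and the main obstacle.} Take $U=V+\gamma W$. The dangerous term is the contribution of $r$ to $\dot W$. By Lemma~\ref{lemm:bnd_r}, this contribution is bounded by $(\|y\|+b\|w\|)(\ell_r^\eta\|\eta-\etastar\|+\ell_r^y\|y\|)$. The $\ell_r^y\|y\|^2$ piece is dominated by the $-c_1k_p\|y\|^2$ term once $k_p$ is large; this is where $k_p^\star$ originates. The cross term $\ell_r^\eta\|y\|\|\eta-\etastar\|$ and the term $L_1\|y\|\|\eta-\etastar\|$ from Step~2 must be absorbed by the negative terms $-\underline q\rho_\eta\|\eta-\etastar\|^2$ and $-\gamma c_1 k_p\|y\|^2$. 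This absorption works when $\gamma$ is fixed and $k_p$ is large, since the $\eta$-dissipation does not scale with $k_p$ while the $y$-dissipation does.

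The term $b\|w\|\,\ell_r^\eta\|\eta-\etastar\|$ is the most delicate, because neither dissipation term scales with $k_p$ on the $w$ side. I would first try making $b$ small relative to $k_p$, for instance $b\sim k_i/k_p^2$ times a fixed fraction, while keeping the ratio $k_i/k_p^2<\underline m$ fixed. Then $w$-dissipation is of order $b\,\underline m$, and I would check that the product condition $(b\ell_r^\eta)^2<4\,\underline q\rho_\eta\cdot\gamma b\,\underline m$ holds for small $b$. If the scalings do not close, the fallback is to rescale $w$ by $k_p$, which is natural since $\lambda\approx k_p y+\dots$. Once a negative definite bound $\dot U\le-2\rho_\pi U$ is obtained, the comparison lemma and the equivalence of norms give the claimed estimate.
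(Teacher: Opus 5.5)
Your overall architecture matches the paper's. You use normal-form coordinates, $\tfrac12\|\eta-\etastar\|^2$ plus a quadratic form in $(y,w)$ with a cross term, and a fixed relative weight with $k_p$ large to absorb the $L_1$- and $r$-cross terms. Step~2, the absorption logic of Step~3, and the transfer back to $(x,\lambda)$ are fine. The gap is in Step~1, which is the real core of the proof, and the mechanism you propose there does not close.

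Put $\beta=bk_p$. The nominal part of $-\dot W$ is $k_p\,y^\top My+(1+\beta)\,y^\top Mw+b\,w^\top Mw-ak_i\,y^\top w-bk_i\|y\|^2$. Its $M$-dependent part is $C\otimes M$ with $C=\begin{bmatrix}k_p&(1+\beta)/2\\(1+\beta)/2&\beta/k_p\end{bmatrix}$, and $\det C=-(1-\beta)^2/4\le 0$. So this part is at best semidefinite (at $\beta=1$, with null direction $w=-k_py$). All the slack must therefore come from $-ak_i\,y^\top w$ cancelling part of $(1+\beta)\,y^\top Mw$.

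Suppose you handle the leftover $y^\top\big((1+\beta)M-ak_iI\big)w$ by Young's inequality with $\underline m I\preceq M\preceq\overline m I$, as you propose. Then you need $(1+\beta)^2(\overline m-\underline m)^2/4<4\beta\underline m(\underline m-\beta k_i/k_p^2)$. For every $\beta$ this forces $\overline m<3\underline m$, a condition-number restriction that is not among the hypotheses. Your scalings make this worse. In Step~3 you keep $b$ fixed as $k_p\to\infty$, so $\beta\to\infty$, and the cross term cannot be dominated unless $M$ is a constant multiple of the identity. Also, $a\sim k_i$ has the wrong dependence on $k_i$.

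The missing idea is the paper's exact tuning. Relative to your $\tfrac12\|y\|^2$, take $b=1/k_p$ and $a=2\underline m/k_i$; this is the paper's $\Pi$ with $\theta=2\underline m/k$, halved. This turns the cross term in $-\dot W$ into $2\,y^\top(M-\underline mI)w$. A Schur complement with respect to $M$ then gives $S\succeq(\underline m-k_i/k_p^2)I$, using only $M\succeq\underline mI$ and no upper bound on $M$.

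Equivalently, in your fallback variable $\tilde w=w/k_p$ one has $W=\tfrac12\|y\|^2+y^\top\tilde w+\tfrac{\underline m k_p^2}{k_i}\|\tilde w\|^2$. Completing the square in $y+\tilde w$ gives $\dot W\le-k_p\big[(\underline m-k_i/k_p^2)\|y\|^2+\underline m\|\tilde w\|^2\big]-(y+\tilde w)^\top r$.

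This is exactly where $0<k_i<\underline mk_p^2$ enters. It is not the overdamping condition, which would be $k_i\le\underline mk_p^2/4$. In any case, frozen-time roots of $s^2+k_p\mu s+k_i\mu$ would not certify stability for the time-varying $M(x(t))$.

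With this estimate the $w$-dissipation is $\underline m\|w\|^2/k_p$, and the $w$--$r$ cross term carries the same factor $1/k_p$. So the term you single out as most delicate is routine, and the proof closes as you outline.
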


\begin{proof} Define $\tilde \eta = \eta - \eta^\star$ and $\zeta \doteq [y^\top, w^\top]^\top$.
Consider the candidate Lyapunov function $V(\tilde\eta,\zeta) = \mu V_1(\tilde\eta) +  V_2(\zeta)$, where $V_1 = \frac12 \norm{\tilde \eta}^2$ and
\begin{equation}
    V_2(\zeta) =\zeta^\top \Pi \zeta, \qquad \Pi = \begin{bmatrix}
    I & k_p^{-1}I\\
    k_p^{-1}I & \theta k_p^{-1} I
\end{bmatrix}
\end{equation}
for some $\mu>0$ and $\theta>0$ to be chosen.
Note that $\Pi$ is positive definite for any $k_p >1/\theta$ (this can be shown using the Schur complement with respect to the upper-left block). 
Hence, $V$ is a valid quadratic Lyapunov function for the state $(\tilde\eta^\top, \zeta^\top )^\top$.
Now let $\tilde x = \Phi^{-1}(y,\eta)$ and $p = \Phi^{-1}(0,\eta)$. Adding and subtracting $\nabla_x^\top q(p) \nabla_x f(p)$, we obtain
\begin{equation}
\label{eq:V1dot_split}
\begin{aligned}
    \dot V_1 &= -\tilde \eta^\top \big( \nabla_x^\top q(p) \nabla_x f(p) \big) + \\
    & \quad + \tilde \eta^\top \left( \nabla_x^\top q(p) \nabla_x f(p) - \nabla_x^\top q(\tilde x) \nabla_x f(\tilde x) \right). 
\end{aligned} 
\end{equation}
From Theorem~\ref{th:flcmo_ges}, the term $-\tilde \eta^\top ( \nabla_x^\top q(p) \nabla_x f(p))$ is bounded by $-\underline{q}\rho_\eta \norm{\tilde\eta}^2$, while, by Assumption~\ref{ass:L1}, the term $\tilde \eta^\top \left( \nabla_x^\top q(p) \nabla_x f(p) - \nabla_x^\top q(\tilde x) \nabla_x f(\tilde x) \right)$ is bounded by $L_1 \|\tilde \eta \|\, \|y\|$. Using Young's inequality, we obtain
\begin{equation}\label{eq:V1_bound}
\dot V_1
\le
-\frac{1}{2} \underline{q} \rho_\eta \norm{\tilde \eta}^2
+
\frac{L_1^2}{2\underline{q}\rho_\eta}\norm{y}^2.
\end{equation}
Define $H(\eta,y) \doteq \nabla_x^\top h(\tilde x) \nabla_x h(\tilde x)$. 
Then, \eqref{eq:picmo_zd_form} is recast to
\begin{equation}\label{eq:y_compact}
\begin{aligned}
    \dot y &= -r(\eta,y)-H(\eta,y)w-k_p H(\eta,y)y, \\
 \dot w &= k_p k y,
\end{aligned}
\end{equation}
with $r$ defined in Lemma~\ref{lemm:bnd_r}.
In compact form, we obtain
\[
\dot \zeta
=
\underbrace{\begin{bmatrix}
-k_p H(\eta,y) & -H(\eta,y)\\[1mm]
k_p k I & 0
\end{bmatrix}}_{N(\eta,y)} \,\zeta
+
\begin{bmatrix}
-r(\eta,y)\\
0
\end{bmatrix}.
\]
Therefore, the time derivative of $V_2$ is
\begin{equation*}\label{eq:Vf_dot_general}
\dot V_2
=
-\zeta^\top \Psi(\eta,y) \zeta
- 2 \zeta^\top \Pi \begin{bmatrix} r(\eta,y) \\ 0 \end{bmatrix},
\end{equation*}
where $\Psi(\eta,y) \doteq - \big(\Pi N(\eta,y)+ N(\eta,y)^\top\Pi \big)$. By explicitly computing the matrix products, we have 
\begin{equation*}
    \Psi(\eta,y)
    = 
    \begin{bmatrix}
    2k_p  H(\eta,y)  - 2kI & 2H(\eta,y) - \theta k I\\[1mm]
 \star & 2{k_p^{-1}}H(\eta,y)
    \end{bmatrix} 
\end{equation*}
Equivalently, $\Psi(\eta,y) = 2 D \overline \Psi D$, with
$$
D= \begin{bmatrix}
    \sqrt{k_p}\,I & 0 \\ 0 &   \tfrac{1}{\sqrt{k_p}} I 
\end{bmatrix}, \quad
\overline\Psi = 
\begin{bmatrix}
      H  - \tfrac{k}{k_p}I & H - \tfrac{1}{2} \theta k I\\[1mm]
  \star & H
    \end{bmatrix},
$$
where we omitted the argument $(\eta,y)$ in $H$ for brevity.
To guarantee that $\overline\Psi \succ 0$, we apply the Schur complement with respect to the lower-right block $H \succeq \underline{m}I \succ 0$. The Schur complement $S$ is
\begin{align*}
    S &= (H - \tfrac{k}{k_p} I) - \big(H - \tfrac{1}{2}\theta k I\big) H^{-1} \big(H - \tfrac{1}{2}\theta k I\big) \\
    &= H - \tfrac{k}{k_p} I - \left(H - \theta k I + \tfrac{\theta^2 k^2}{4} H^{-1}\right) \\
    &= k\left(\theta - \tfrac{1}{k_p} \right)I - \tfrac{\theta^2 k^2}{4} H^{-1}.
\end{align*}
Since $H^{-1} \preceq {\underline{m}}^{-1} I$, we can lower-bound $S$ as
\begin{equation*}
    S \succeq \left[ k\left(\theta - \frac{1}{k_p}\right) - \frac{\theta^2 k^2}{4\underline{m}} \right] I  \succeq \left( \underline{m} - \frac{k}{k_p} \right) I,
\end{equation*}
where we selected $\theta = {2\underline{m}}/{k}$.
By choosing $k_p > {k}/{\underline{m}} > {1}/{\theta} = {k}/({2\underline{m}})$, which is equivalent to the condition $k_i < \underline m k_p^2$, we obtain $S \succeq c_s I \succ 0$ with $c_s = \underline{m} - {k}/{k_p} > 0$. Notice that this combined choice of $\theta$ and $k_p$ also ensures that $\Pi \succ 0$. 
Consequently, there exists a constant $\overline{c}_H > 0$ such that $\overline\Psi \succeq \overline{c}_H I$. Using $\Psi(\eta,y) = 2 D \overline \Psi D$, this implies
\begin{equation}\label{eq:Psi_bound}
    \Psi \succeq 2 \overline{c}_H D^2 = 2 \overline{c}_H \begin{bmatrix}
        k_p I & 0 \\ 0 & k_p^{-1} I
    \end{bmatrix}.
\end{equation}
Therefore, the quadratic term is bounded as:
\begin{equation}
    -\zeta^\top \Psi \zeta \le -2 \overline{c}_H k_p \norm{y}^2 - \frac{2 \overline{c}_H}{k_p} \norm{w}^2.
\end{equation}
Next, we bound the cross-term 
\begin{equation}
    -2\zeta^\top \Pi \begin{bmatrix} r(\eta,y) \\ 0 \end{bmatrix} = -2 y^\top r(\eta,y) - \frac{2}{k_p} w^\top r(\eta,y).
\end{equation}
Recalling from Lemma~\ref{lemm:bnd_r} that $r(\eta,y)$ is globally Lipschitz, i.e., $\norm{r(\eta,y)} \le \ell_r^\eta \norm{\tilde \eta} + \ell_r^y \norm{y}$, we can bound the terms using Young's inequality:
$$
\begin{aligned}
    \left|2 y^\top r(\eta,y)\right|    & \leq (2\ell_r^y + \ell_r^\eta) \norm{y}^2 + \ell_r^\eta  \norm{\tilde \eta}^2, \\
     \frac{2}{k_p} \left| w^\top r(\eta,y)\right| &\leq \frac{2(\ell_r^y)^2}{k_p\bar c_H}  \norm{y}^2 + 
     \frac{\bar c_H}{k_p}  \norm{w}^2
    +\frac{2(\ell_r^\eta)^2}{k_p\bar c_H}  \norm{\tilde \eta}^2.
\end{aligned}
$$
Combining all the previous bounds and assuming $k_p\geq1$, we obtain the following bound for $\dot V_2$:
\begin{equation}
    \dot V_2 \leq -  (2 \overline{c}_H k_p - C_y  ) \norm{y}^2 - \frac{\overline{c}_H}{k_p} \norm{w}^2 + C_\eta \norm{\tilde \eta}^2,
\end{equation}
where $C_y \doteq 2\ell_r^y + \ell_r^\eta +  \frac{2(\ell_r^y)^2}{\bar c_H}$ and $C_\eta \doteq \ell_r^\eta+\frac{2(\ell_r^\eta)^2}{\bar c_H}$. Consequently, the derivative of the Lyapunov function satisfies 
$$
\begin{aligned}
    \dot V   & \leq 
-  \left(2 \overline{c}_H k_p - C_y -  \mu \frac{L_1^2}{2\underline{q}\rho_\eta} \right) \norm{y}^2 
-  \frac{\overline{c}_H}{k_p} \norm{w}^2 + \\ 
& \quad - \left( \frac{\mu \underline{q}\rho_\eta}{2} - C_\eta \right) \norm{\tilde \eta}^2.
\end{aligned}
$$
By taking  
$\mu > 2 C_\eta/(\underline{q}\rho_\eta)$ and 
\begin{equation}\label{eq:kp_bound} 
    k_p > \max \left\{ 1,
    \frac{k}{\underline{m}}, 
    \frac{1}{2 \overline{c}_H} \left( C_y + \mu \frac{L_1^2}{2\underline{q}\rho_\eta}\right) \right\} \doteq k_p^\star,
 \end{equation}
we obtain $\dot V \le - \gamma \big( \norm{\tilde \eta}^2 + \norm{y}^2 + \norm{w}^2 \big)$ for some constant $\gamma > 0$. This ensures the global exponential stability of the equilibrium $(\etastar, 0, 0)$ of the system in the normal-form coordinates. As previously stated, the global exponential convergence carries over to the original $(x,\lambda)$ coordinates by virtue of the globally Lipschitz diffeomorphism $\Phi^{-1}$.
\end{proof}

Assumptions~\ref{ass:Lip_diffeo}, \ref{ass:L1}, and~\ref{ass:lip_l} are Lipschitz-type regularity conditions, which are required to establish the global exponential convergence results in Theorems~\ref{corollary:flcmo} and \ref{th:picmo_ges} over $\R^n$. To verify these assumptions globally, sufficient conditions are that $\nabla_x f$ and $\nabla_x h$ are globally bounded and Lipschitz; see Appendix~II for detailed derivations. 

However, in a practical setting, optimization algorithms operate within bounded regions. Given any bounded set of initial conditions, one can define a sufficiently large compact sublevel set of the Lyapunov function that completely encloses them. Since $f, h \in C^2$, the Lipschitz conditions are automatically satisfied by restriction on any such compact set. Consequently, by using the local Lipschitz constants on this set, one can determine a valid $k_p^\star$ that makes the Lyapunov derivative strictly negative. This guarantees that the compact set is forward-invariant, and the exact same proof steps hold to conclude semi-global exponential stability counterparts of Theorems~\ref{th:flcmo_ges} and~\ref{th:picmo_ges}. Furthermore, since the local Lipschitz constants are generally much smaller than their global counterparts, we can significantly reduce the conservativeness of the closed-form expression of $k_p^\star$, which is provided in Appendix~I.

Our derivation of explicit tuning bounds distinguishes this work from singular perturbation approaches like~\cite{allibhoy_anytime_2025}, which does not provide a specific lower bound for $k_p$. The result in~\cite{cmo24} differs structurally from Theorem~\ref{th:picmo_ges} as it imposes a lower bound on $k_i$ rather than an upper one.

\section{Further comments}\label{sec:further_comments}
In this section, we investigate the relationship between the PI dynamics in~\eqref{eq:picmo_zform} with continuous-time ALM \cite{bertsekas1996constrained}, the FL dynamics in~\eqref{eq:fl-cmo-general}, and PDGD.

ALM can be framed as a particular case of PI dynamics \eqref{eq:picmo_zform}. By applying the primal-dual flow to the quadratically augmented Lagrangian $\mathcal{L}_w(x,\lambda): \R^{n} \times \R^m \rightarrow \R$, defined as $\mathcal{L}_w(x,\lambda) \doteq f(x) + \lambda^\top h(x) + \tfrac{w}{2}\,\|h(x)\|^2$ yields the PI dynamics \eqref{eq:picmo_zform} under the choices $k_p = w$, $k_i = 1$, and $z = \lambda$. 
Previous works show that ALM converges locally for a sufficiently large $w$ under Assumption~\ref{ass:convexity}. However, under nonlinear constraints, convergence is limited to a neighborhood of the optimum. In contrast, we establish global exponential convergence for ALM as a corollary of Theorem~\ref{th:picmo_ges}, provided that the penalty parameter is chosen as $w \geq k_p^\star$.

Second, we note that the PI dynamics~\eqref{eq:picmo_lamb_form}  admits an interpretation 
as a gradient-flow approximation of the FL 
dynamics~\eqref{eq:fl-cmo-general}, yielding a more efficient implementation achieving the same optimal solution. The control law~\eqref{eq:fl-cmo_lambda} can be written equivalently as
\begin{equation}\label{eq:lambda_opt}
    \lambda(x) = \arg\min_{\sigma\in\R^m}\ \|A(x)\sigma-b(x)\|^2,
\end{equation}
where $A(x) \doteq \nabla_x^\top h(x) \nabla_x h(x)$ and $b(x)$ $\doteq$ $-\nabla_x^\top h(x)$ $\nabla_x f(x)$ $+$ $\mathcal{G}(h(x))$. Under Assumption~\ref{ass:Jrank}, $A(x) \succ 0$, and~\eqref{eq:lambda_opt} admits a unique minimizer. The negative gradient flow that asymptotically tracks this minimizer is
\begin{equation}\label{eq:sigma_gd}
    \dot{\sigma} = -P_0(x)\nabla_\sigma \|A(x)\sigma-b(x)\|^2,
\end{equation}
with $P_0 = P_0^\top \succ 0$ chosen arbitrarily. Choosing $\mathcal{G}(y) = -ky$ with $k \in \R^+$ and $P_0 = \frac{\alpha}{2}(\nabla_x^\top h(x) \nabla_x h(x))^{-1}$ recasts~\eqref{eq:sigma_gd} as $\dot\sigma = -\alpha (A(x)\sigma - b(x))$, which, with the identifications $k_p = \alpha$ and $\lambda = \sigma$, yields~\eqref{eq:picmo_lamb_form}. 

This interpretation resembles the 
construction in~\cite{allibhoy_anytime_2025} for variational inequalities. We also remark that solving~\eqref{eq:fl-cmo_lambda} is the most expensive step of the FL dynamics, as it requires solving a linear system of size $m$; the PI dynamics bypasses this inversion, and the approximation improves as $k_p = \alpha$ increases and the gradient flow becomes faster.

Finally, PDGD coincides with the pure integral action of PI ($k_p = 0$, $k_i = 1$): no choice of $\alpha>0$ and $k>0$ in the gradient-flow construction recovers it, since $k_p = \alpha = 0$ forces $k_i = k \alpha = 0$ for all $k \in \R$. Therefore, PDGD (whose global exponential stability has been established for strongly convex problems in~\cite{qu19}) may fail to converge even if the optimization problem satisfies the assumptions considered in this paper, as shown in the illustrative example in Sec. \ref{sec:academic_es}. 

\section{Illustrative Example} 
\label{sec:academic_es}
\begin{figure}
    \centering
    \includegraphics[width=\linewidth]{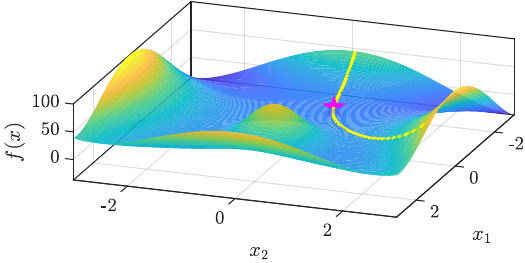}
    \caption{Graphical illustration of the problem in Sec.~\ref{sec:academic_es}.}
    \label{fig:es1_3d}
\end{figure}

\begin{figure}
    \centering
    \includegraphics[width=\linewidth]{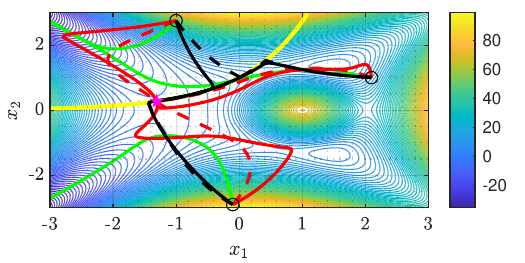}
    \caption{Trajectories generated by PI and FL for the problem in Sec.~\ref{sec:academic_es}. Level lines of $f(x)$ and global optimal solution ({\color{magenta}{$\star$}}). Trajectories generated by FL with $k=1$ (\tikzref[color=red, line width=1.5pt]), FL with $k=10$ (\tikzref[color=red, line width=1.5pt, dash pattern=on 4pt off 1pt]), PI with $k_p=100$ (\tikzref[color=black, line width=1.5pt]), PI with $k_p = k_p^\star \approx 1.94 \times 10^{15}$ (\tikzref[color=black, line width=1.5pt, dash pattern=on 4pt off 1pt]), and PDGD (\tikzref[color=green, line width=1.5pt]). All trajectories are initialized at points ($\circ$).}
    \label{fig:es1_traj}
\end{figure}
We consider a two-dimensional example to illustrate a scenario in which Assumption~\ref{ass:convexity} is satisfied despite $f(x)$ being non-convex, $h(x)$ being nonlinear, and the two functions being nontrivially related. Specifically
$$ f(x) \! = \!(x_1^2\!-\!x_2^2\!-\!1)^2\!+\!0.7(x_1^3\! + \!x_1x_2^2)\!+\!70e^{-2(x_1-1)^2-2x_2^2} $$
and $h(x) = x_2 - e^{x_1}$. The function $f(x)$ has three local minima, one local maximum, and one saddle point. Assumption~\ref{ass:Jrank} holds since $\nabla_x h(x) = [-e^{x_1},1]^\top \neq 0$ for all $x \in \R^2$. $f(x)$ is strongly convex when restricted to $\{x:h(x)=0\}$, thereby satisfying Assumption \ref{ass:convexity}.
Moreover, as discussed in Sec.~\ref{sec:PIalg}, the regularity 
Assumptions~\ref{ass:Lip_diffeo}, \ref{ass:L1}, and~\ref{ass:lip_l} are verified on any compact forward-invariant set containing the initial conditions. Consequently, in this specific illustrative example, the developed theoretical results hold in a semiglobal sense. Fig.~\ref{fig:es1_3d} represents the optimization problem data, and Fig.~\ref{fig:es1_traj} shows the trajectories generated by FL dynamics in Eq.~\eqref{eq:fl-cmo-general} and PI dynamics in Eq.~\eqref{eq:picmo_zform} for different initial conditions and $k_i = 1$. For the PI scheme, two values of $k_p$ are considered: the conservative threshold $k_p^\star$ provided by the explicit closed-form bound derived in Appendix~I on the compact set $\mathcal{K}$, and a smaller, less conservative value. We observe that, as expected from the theoretical results established in this work, all trajectories converge to the unique global optimal solution of the optimization problem. Conversely, the trajectories generated by PDGD, initialized at the same points 
as for~\eqref{eq:picmo_zform}, diverge.

\section{Conclusion}\label{sec:CON}
We establish global exponential convergence of the proportional-integral and feedback linearization dynamics for a class of equality-constrained non-convex optimization problems. While previous works assume strong convexity in the ambient space, this work extends the analysis to a class of non-convex problems that satisfy a structural geometric property induced by the equality constraints. Moreover, we show that the proportional-integral dynamics is a lower-complexity approximation of the feedback-linearization dynamics that yields the same optimal solution. 
Future work will focus on overconstrained optimization and discrete-time models.

\bibliographystyle{IEEEtran}
\bibliography{bibliog}

\appendix 

\section{Bound on the parameter $k_p$}
In this section, we provide an explicit bound on the parameter $k_p$ in terms of the problem's data.
\begin{lemma}[Explicit form of Lemma~\ref{lemm:bnd_r}]\label{lem:explicit_r}
Let $\overline m > 0$ be such that $\|\nabla_x^\top h(x) \nabla_x h(x)\| \leq \overline m$~for all $x \in \R^n$. Under Assumptions~\ref{ass:Jrank}, \ref{ass:Lip_diffeo}, and~\ref{ass:lip_l}, the bound 
in~\eqref{eq:r_bound} holds with
\begin{equation}\label{eq:c_r_explicit}
    \ell_r^\eta,\, \ell_r^y \leq L_r \doteq \overline m\, L_2\, L_\Psi.
\end{equation}
\end{lemma}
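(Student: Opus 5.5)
The plan is to make the proof of Lemma~\ref{lemm:bnd_r} quantitative by tracking each constant. First I will write the residual in the form already used there. Since $\lstar = \varphi(\xstar)$ and $H(\tilde x) \doteq \nabla_x^\top h(\tilde x)\nabla_x h(\tilde x)$ is invertible by Assumption~\ref{ass:Jrank}, we have
\begin{equation*}
H(\tilde x)\varphi(\tilde x) = -\nabla_x^\top h(\tilde x)\nabla_x f(\tilde x).
\end{equation*}
It follows that
\begin{equation*}
r(\eta,y) = H(\tilde x)\bigl[\varphi(\xstar)-\varphi(\tilde x)\bigr], \qquad \tilde x = \Phi^{-1}(y,\eta).
\end{equation*}

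Next I will bound the norm using submultiplicativity of the induced norm and the uniform bound $\|H(\tilde x)\|\le \overline m$. This gives $\|r(\eta,y)\|\le \overline m\,\|\varphi(\tilde x)-\varphi(\xstar)\|$. Assumption~\ref{ass:lip_l} then bounds this by $\overline m L_2\|\tilde x-\xstar\|$.

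It remains to express $\|\tilde x - \xstar\|$ in normal-form coordinates. Since $\xstar = \Phi^{-1}(0,\etastar)$, Assumption~\ref{ass:Lip_diffeo} gives
\begin{equation*}
\|\tilde x-\xstar\| \le L_\Psi \left\|\begin{bmatrix} y\\ \eta-\etastar\end{bmatrix}\right\| \le L_\Psi\bigl(\|y\|+\|\eta-\etastar\|\bigr),
\end{equation*}
where the last step uses $\sqrt{a^2+b^2}\le a+b$, exactly as in \eqref{eq:bound_x}. Combining the three estimates yields
\begin{equation*}
\|r(\eta,y)\|\le \overline m L_2 L_\Psi\|\eta-\etastar\| + \overline m L_2 L_\Psi\|y\|.
\end{equation*}
So \eqref{eq:r_bound} holds with $\ell_r^\eta=\ell_r^y=L_r$, and any constants $\ell_r^\eta,\ell_r^y\le L_r$ chosen in Lemma~\ref{lemm:bnd_r} can be replaced by $L_r$.

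The only step needing care is the identity $\lstar=\varphi(\xstar)$. To justify it, I will multiply the stationarity condition in \eqref{FO} by $\nabla_x^\top h(\xstar)$ and invert the positive definite matrix $H(\xstar)$, which is possible by Assumption~\ref{ass:Jrank}. Apart from this, the argument is a direct chain of norm inequalities, so I do not expect any real obstacle.
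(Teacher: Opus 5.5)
Your proposal is correct and follows the paper's proof essentially step for step. You rewrite $r(\eta,y)=H(\tilde x)\bigl[\varphi(\xstar)-\varphi(\tilde x)\bigr]$ via $\lstar=\varphi(\xstar)$, bound it by $\overline m L_2\|\tilde x-\xstar\|$, and finish with the Lipschitz bound on $\Phi^{-1}$; your explicit derivation of $\lstar=\varphi(\xstar)$ from \eqref{FO} is a welcome detail that the paper only cites.
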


\begin{proof}
Given $\tilde x$ defined in Lemma~\ref{lemm:bnd_r} and $\varphi(x)$ in Assumption~\ref{ass:lip_l}, it holds
\begin{equation}
    \nabla_x^\top h(\tilde x) \nabla_x f(\tilde x) 
    = -\nabla_x^\top h(\tilde x) \nabla_x h(\tilde x)\, \varphi(\tilde x).
\end{equation}
Consider $r(\eta,y)$ defined in Lemma~\ref{lemm:bnd_r}. Substituting $\lstar = \varphi(\xstar)$ and multiplying the first term by $\nabla_x^\top h(\tilde x) \nabla_x h(\tilde x)\,\left(\nabla_x^\top h(\tilde x) \nabla_x h(\tilde x)\right)^{-1}$, we get
\begin{equation}
    r(\eta,y) 
    = \nabla_x^\top h(\tilde x) \nabla_x h(\tilde x)\, \big(\varphi(\xstar) - \varphi(\tilde x)\big).
\end{equation}
Taking norms and applying Assumption~\ref{ass:lip_l},
\[
    \|r(\eta,y)\| 
    \leq \overline m\, \|\varphi(\xstar) - \varphi(\tilde x)\| 
    \leq \overline m\, L_2\, \|\tilde x - \xstar\|.
\]
Finally, by Assumption~\ref{ass:Lip_diffeo},
\begin{align*}
    \|\tilde x - \xstar\| 
    &= \|\Phi^{-1}(y,\eta) - \Phi^{-1}(0,\etastar)\|
    \leq L_\Psi\,\norm{\begin{pmatrix}y \\ \eta-\etastar
    \end{pmatrix}} \\
    & \leq L_\Psi\,\big(\|y\| + \|\eta-\etastar\|\big),
\end{align*}
which yields the claim.
\end{proof}

\begin{lemma}[Explicit lower bound on $\overline\Psi$]\label{lem:lower_psi}
Under the setting and notation of the proof of 
Theorem~\ref{th:picmo_ges}, with the choice $\theta = 2\underline m/k$ 
and the condition $k_p \geq 2k/\underline m$, the matrix 
$\overline\Psi$ satisfies $\overline\Psi \succeq (\underline m/8)\, I$, 
i.e., the constant $\overline c_H$ in~\eqref{eq:Psi_bound} can be taken 
as $\overline c_H = \underline m/8$.
\end{lemma}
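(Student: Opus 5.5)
The plan is to prove $\overline\Psi - \tfrac{\underline m}{8} I \succeq 0$ directly with a Schur complement. The key observation is that, after substituting the chosen parameters, every block of $\overline\Psi$ is a polynomial in the single symmetric matrix $H=H(\eta,y)$. All blocks therefore commute, and the matrix inequality reduces to a scalar inequality on the eigenvalues of $H$.

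\textbf{Substituting the parameters.} With $\theta = 2\underline m/k$ we get $\tfrac12\theta k = \underline m$, so the off-diagonal block becomes $H-\underline m I$. Set $a \doteq k/k_p$; the condition $k_p \ge 2k/\underline m$ gives $0< a\le \underline m/2$. Then
\begin{equation*}
\overline\Psi - \tfrac{\underline m}{8}I = \begin{bmatrix} H - (a+\tfrac{\underline m}{8})I & H-\underline m I\\ H-\underline m I & H-\tfrac{\underline m}{8} I\end{bmatrix}.
\end{equation*}

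\textbf{Schur complement.} By Assumption~\ref{ass:Jrank}, $H\succeq \underline m I$, so the lower-right block satisfies $H-\tfrac{\underline m}{8}I\succeq \tfrac{7\underline m}{8}I\succ 0$. It therefore suffices that
\begin{equation*}
S' \doteq H-(a+\tfrac{\underline m}{8})I-(H-\underline m I)\bigl(H-\tfrac{\underline m}{8}I\bigr)^{-1}(H-\underline m I)\succeq 0 .
\end{equation*}
Diagonalize $H$ orthogonally. Each eigenvalue $s\ge \underline m$ of $H$ then contributes the scalar
\begin{equation*}
s-a-\tfrac{\underline m}{8}-\frac{(s-\underline m)^2}{s-\underline m/8}.
\end{equation*}
Multiply by the positive quantity $s-\underline m/8$. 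The quadratic terms in $s$ cancel, leaving the affine function
\begin{equation*}
g(s)=\bigl(\tfrac{7\underline m}{4}-a\bigr)s+\tfrac{\underline m}{8}\bigl(a+\tfrac{\underline m}{8}\bigr)-\underline m^2 .
\end{equation*}

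\textbf{Checking the affine function.} Since $a\le \underline m/2$, the slope $\tfrac{7\underline m}{4}-a$ is positive, so the minimum of $g$ over $s\ge\underline m$ is attained at $s=\underline m$:
\begin{equation*}
g(\underline m)=\tfrac34\underline m^2-\tfrac78 a\underline m+\tfrac{\underline m^2}{64}.
\end{equation*}
Using $a\le \underline m/2$ again gives $g(\underline m)\ge \underline m^2\bigl(\tfrac34-\tfrac7{16}+\tfrac1{64}\bigr)>0$. Hence $S'\succeq 0$, so $\overline\Psi\succeq \tfrac{\underline m}{8}I$ and $\overline c_H=\underline m/8$ is admissible in~\eqref{eq:Psi_bound}.

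\textbf{Main obstacle.} The only delicate step is justifying the reduction to eigenvalues. Because $H$ is symmetric and every block is a function of $H$ alone, a single orthogonal change of basis $\mathrm{diag}(U,U)$ block-diagonalizes the whole $2m\times 2m$ matrix into $2\times 2$ scalar blocks, one per eigenvalue. Positive semidefiniteness of each $2\times2$ block is exactly the scalar Schur condition checked above. The remaining work is routine algebra.
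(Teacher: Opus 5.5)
Your proof is correct, but it takes a different route from the paper's. The paper never shifts by $\frac{\underline m}{8}I$. Instead it writes the block factorization $\overline\Psi = T\,\mathrm{diag}(S,H)\,T^\top$ with $T=\begin{bmatrix} I & BH^{-1}\\ 0 & I\end{bmatrix}$, $B=H-\underline m I$ and $S=(2\underline m-\frac{k}{k_p})I-\underline m^2H^{-1}$. It shows $\mathrm{diag}(S,H)\succeq\frac{\underline m}{2}I$, then transfers this to $\overline\Psi$ by paying the factor $\|T^{-1}\|^2\le 4$, which comes from $\|BH^{-1}\|=\|I-\underline m H^{-1}\|\le 1$. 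That factor of $4$ is where the $\frac18$ comes from.

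You instead apply the Schur criterion exactly to $\overline\Psi-\frac{\underline m}{8}I$. Because every block is a function of the single symmetric matrix $H$, the matrix inequality collapses to the scalar affine condition $g(s)\ge 0$ for $s\ge\underline m$. Your algebra, the cancellation of the quadratic terms, and the bound $g(\underline m)\ge\frac{21}{64}\underline m^2$ all check out.

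What each approach buys:
\begin{itemize}
\item The paper's norm-based argument needs no simultaneous diagonalization, so it would survive perturbations of the blocks that break commutativity. The price is that it is lossy.
\item Your reduction is lossless and exposes the slack. Running your computation with a general shift $c=\underline m-\frac{k}{k_p}$ gives determinant $\frac{k}{k_p}(s-\underline m)\ge 0$. Equivalently, $\overline\Psi-(\underline m-\frac{k}{k_p})I=\begin{bmatrix} I\\ I\end{bmatrix}(H-\underline m I)\begin{bmatrix} I & I\end{bmatrix}+\mathrm{diag}(0,\frac{k}{k_p}I)\succeq 0$.
\end{itemize}
Hence $\overline\Psi\succeq(\underline m-\frac{k}{k_p})I\succeq\frac{\underline m}{2}I$. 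Your method therefore justifies $\overline c_H=\underline m/2$ rather than $\underline m/8$, and a correspondingly smaller $\kappa$ in Theorem~\ref{th:explicit_threshold}.
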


\begin{proof}
Under the choice $\theta = 2\underline m/k$, $\overline\Psi$ in the proof of Theorem~\ref{th:picmo_ges} reads
\begin{equation}
    \overline\Psi = \begin{bmatrix} H-\frac{k}{k_p}I & H - \underline m I \\ \star & H \end{bmatrix} 
\end{equation}
Let us consider the Schur block factorization of $\overline\Psi$:
\[
    \overline\Psi = T \begin{bmatrix} S & 0 \\ 0 & H \end{bmatrix} T^\top, 
    \qquad
    T = \begin{bmatrix} I & B H^{-1} \\ 0 & I \end{bmatrix},
\]
with 
\begin{equation}
    B \doteq H - \underline m I, \quad S = \Bigl(2\underline m - \frac{k}{k_p}\Bigr)I - \underline m^2\, H^{-1}.
\end{equation}
For $k_p \geq 2k/\underline m$, it holds $2\underline m - k/k_p \geq 3\underline m/2$, and since
$H \succeq \underline m I$ implies $\underline m^2 H^{-1} \preceq \underline m I$,
\begin{equation}\label{eq:bound_S_appendix}
    S \;\succeq\; \Bigl(\frac{3\underline m}{2} - \underline m\Bigr)I
      = \frac{\underline m}{2}\, I.
\end{equation}
For any $v \in \R^{2m}$, set $u = T^\top v$. Using $H \succeq \underline m I \succeq (\underline m/2)\, I$ and using Eq.~\eqref{eq:bound_S_appendix}
\begin{equation}\label{eq:appendix_lemmPsibnd_bnswithu}
    v^\top \overline\Psi v 
    = u^\top \begin{bmatrix} S & 0 \\ 0 & H \end{bmatrix} u 
    \geq \tfrac{\underline m}{2}\, \|u\|^2.
\end{equation}
We now bound $\norm{u}^2= \|T^\top v\|^2 \geq \|v\|^2/\|T^{-\top}\|^2$. Considering that
\begin{equation}
    T^{-1} = \begin{bmatrix} I & -B H^{-1} \\ 0 & I \end{bmatrix},
\end{equation}
and the decomposition $T^{-1} = I + N$, wth $N \doteq T^{-1} - I$. From $\underline m I \preceq H$ we have $0 \preceq \underline m H^{-1} \preceq I$, hence $\|B H^{-1}\| = \|I - \underline m H^{-1}\| \leq 1$ and therefore $\|N\| \leq 1$. The triangle inequality yields $\|T^{-1}\| \leq \|I\| + \|N\| \leq 2$, so $\|u\|^2 \leq \norm{v}^2/4$. Finally, using this bound on $\norm{u}^2$ in Eq.~\eqref{eq:appendix_lemmPsibnd_bnswithu} yields
\begin{equation}
    v^\top \overline\Psi v \geq (\underline m/8)\,\|v\|^2,
\end{equation}
which proves $\overline\Psi \succeq (\underline m/8)\, I$.
\end{proof}

We now use Lemmas~\ref{lem:explicit_r} and~\ref{lem:lower_psi} to 
derive an explicit form of the threshold $k_p^\star$ in 
Theorem~\ref{th:picmo_ges}.

\begin{theorem}[Explicit threshold]\label{th:explicit_threshold}
Let the assumptions of Theorem~\ref{th:picmo_ges} and Lemmas~\ref{lem:explicit_r}-\ref{lem:lower_psi} hold. Then, the conclusions of 
Theorem~\ref{th:picmo_ges} hold whenever
\begin{equation}\label{eq:explicit_kp}
    k_p \geq \max\left\{
        1,\ \frac{2k}{\underline m},\ \kappa
    \right\},
\end{equation}
where
\begin{equation}\label{eq:kappa}
    \kappa \doteq 
    \frac{12 L_r}{\underline m} 
    + \frac{64 L_r^2}{\underline m^2}
    + \frac{8 L_1^2\, L_r}{\underline{q}^2\rho_\eta^2\, \underline m}
    + \frac{128 L_1^2\, L_r^2}{\underline{q}^2\rho_\eta^2\, \underline m^2},
\end{equation}
together with $k_i \leq \underline m\, k_p^2/2$.
\end{theorem}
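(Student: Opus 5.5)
The plan is to rerun the final step of the proof of Theorem~\ref{th:picmo_ges} with every generic constant replaced by its explicit value. Lemma~\ref{lem:explicit_r} supplies the constants in the bound on $r$, and Lemma~\ref{lem:lower_psi} supplies $\overline c_H$. A specific admissible choice of $\mu$ is then fixed, and the resulting expression is checked to be dominated by $\kappa$.

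First, the gain conditions. The requirement $k_i \le \underline m k_p^2/2$ is exactly $k = k_i/k_p \le \underline m k_p/2$, that is, $k_p \ge 2k/\underline m$. This is the hypothesis of Lemma~\ref{lem:lower_psi}, so $\overline c_H = \underline m/8$ may be used. It also gives $k_p \ge 2k/\underline m > k/\underline m$, which covers the second entry of $k_p^\star$ in~\eqref{eq:kp_bound} and ensures $\Pi\succ0$ and $S\succ0$ with $\theta = 2\underline m/k$. The entry $k_p\ge1$ is kept as in~\eqref{eq:kp_bound}.

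Next, the constants $C_y$ and $C_\eta$. Substituting $\ell_r^\eta,\ell_r^y\le L_r$ (Lemma~\ref{lem:explicit_r}) and $\overline c_H=\underline m/8$ gives
\begin{align*}
C_y &\le 3L_r + \frac{16L_r^2}{\underline m}, &
C_\eta &\le L_r + \frac{16L_r^2}{\underline m}.
\end{align*}
Choosing $\mu = 4C_\eta/(\underline q\rho_\eta)$, the third entry of $k_p^\star$ becomes
\begin{equation*}
\frac{4}{\underline m}\Bigl(C_y + \frac{2C_\eta L_1^2}{\underline q^2\rho_\eta^2}\Bigr).
\end{equation*}
Inserting the two bounds above and expanding term by term reproduces exactly the four summands of $\kappa$ in~\eqref{eq:kappa}: $12L_r/\underline m$, $64L_r^2/\underline m^2$, $8L_1^2L_r/(\underline q^2\rho_\eta^2\underline m)$ and $128L_1^2L_r^2/(\underline q^2\rho_\eta^2\underline m^2)$.

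The main obstacle is strictness. Theorem~\ref{th:picmo_ges} uses strict inequalities ($\mu > 2C_\eta/(\underline q\rho_\eta)$ and $k_p > k_p^\star$), whereas~\eqref{eq:explicit_kp} allows $k_p = \kappa$. At that value the coefficient of $\norm{y}^2$ in $\dot V$ could vanish. I would restore slack by taking a slightly smaller $\mu$, for instance $\mu = 3C_\eta/(\underline q\rho_\eta)$.
\begin{itemize}
\item The $\tilde\eta$ coefficient is then $\tfrac12 C_\eta > 0$.
\item The required lower bound on $k_p$ drops strictly below $\kappa$ by $2C_\eta L_1^2/(\underline m\underline q^2\rho_\eta^2) > 0$, provided $C_\eta>0$ and $L_1>0$.
\item In the degenerate case $C_\eta = 0$ or $L_1 = 0$, the $\tilde\eta$-dependence decouples and $\mu$ is unconstrained, so strictness can be obtained directly.
\end{itemize}
In every case $\dot V \le -\gamma(\norm{\tilde\eta}^2+\norm{y}^2+\norm{w}^2)$ for some $\gamma>0$. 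The remainder of the proof of Theorem~\ref{th:picmo_ges} then applies verbatim.
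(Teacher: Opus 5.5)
Your proposal is correct and follows the paper's proof essentially line by line: $\overline c_H=\underline m/8$ from Lemma~\ref{lem:lower_psi}, $\ell_r^\eta,\ell_r^y\le L_r$ from Lemma~\ref{lem:explicit_r}, the resulting bounds on $C_y$ and $C_\eta$, and the choice $\mu=4C_\eta/(\underline q\rho_\eta)$, which expands exactly to $\kappa$. Your switch to $\mu=3C_\eta/(\underline q\rho_\eta)$ to keep the inequality strict at $k_p=\kappa$ is a valid refinement that the paper skips (it simply writes the threshold in~\eqref{eq:kp_bound} with $\geq$), and your degenerate-case bullet is unnecessary because Assumption~\ref{ass:L1} and Lemma~\ref{lemm:bnd_r} already give $L_1>0$ and $C_\eta>0$ (for $L_1=0$, changing $\mu$ alone would not create slack in the $\norm{y}^2$ coefficient anyway).
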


\begin{proof}
We follow the proof of Theorem~\ref{th:picmo_ges} and make explicit 
each of the constants $\overline c_H, \mu, C_y, C_\eta$ defined 
therein. By Lemma~\ref{lem:lower_psi}, with the choice 
$\theta = 2\underline m/k$ and the condition 
$k_p \geq 2k/\underline m$ (equivalently, 
$k_i \leq \underline m k_p^2/2$), the constant $\overline c_H$ 
in~\eqref{eq:Psi_bound} can be taken as 
$\overline c_H = \underline m/8$. From Lemma~\ref{lem:explicit_r}, 
$\ell_r^\eta, \ell_r^y \leq L_r$. Substituting into the definitions 
of $C_y$ and $C_\eta$, we have
\begin{align}
    C_y \leq 3 L_r + \frac{16 L_r^2}{\underline m}, \quad
    C_\eta \leq L_r + \frac{16 L_r^2}{\underline m}.
\end{align}
Choosing $\mu = 4 C_\eta/(\underline{q} \rho_\eta)$ (which satisfies the 
condition $\mu > 2 C_\eta/(\underline{q} \rho_\eta)$ strictly), the threshold 
in~\eqref{eq:kp_bound} becomes
\begin{equation}
    k_p \geq \frac{4}{\underline m} 
        \left( C_y + \mu \frac{L_1^2}{2 \underline{q} \rho_\eta} \right)
    = \frac{4 C_y}{\underline m} + \frac{8 C_\eta L_1^2}{\underline m \underline{q}^2 \rho_\eta^2},
\end{equation}
which, after substitution of the bounds on $C_y$ and $C_\eta$, yields~\eqref{eq:explicit_kp} with~\eqref{eq:kappa}.
\end{proof}

\section{Sufficient conditions for Assumptions 4, 5, and 6}
In this section, we verify the sufficient conditions on $f$ and $h$ so that Assumptions 4,5 and 6 are globally satisfied.
\begin{proposition}[A sufficient condition for Assumption~\ref{ass:Lip_diffeo}]
\label{prop:app2}
    Assume that $\norm{\nabla_x h(x)} \leq B_h$ and $\norm{\nabla_x q(x)} \leq B_q$ for some $B_h,B_q > 0$. Then, $\Phi$ in Eq.~\eqref{eq:diffeo} is globally Lipschitz.
    Moreover, if Assumption~\ref{ass:Jrank} is satisfied, then also the inverse map $\Phi^{-1}$ is globally Lipschitz.
\end{proposition}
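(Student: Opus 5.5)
The plan is to handle the two claims separately: first $\Phi$, via a mean-value argument on the Jacobian, then $\Phi^{-1}$, via a uniform bound on the inverse Jacobian.

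\emph{Lipschitz continuity of $\Phi$.} The Jacobian of $\Phi$ is
\[
\nabla_x^\top \Phi(x)=\begin{bmatrix}\nabla_x^\top h(x)\\ \nabla_x^\top q(x)\end{bmatrix},
\]
so $\|\nabla_x^\top\Phi(x)\|\le \sqrt{B_h^2+B_q^2}\le B_h+B_q$ for every $x$. I will write
\[
\Phi(x_1)-\Phi(x_2)=\int_0^1 \nabla_x^\top\Phi\bigl(x_2+s(x_1-x_2)\bigr)(x_1-x_2)\,ds
\]
along the segment, which gives $L_\Phi=\sqrt{B_h^2+B_q^2}$.

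\emph{Lipschitz continuity of $\Phi^{-1}$.} Since $\Phi$ is a global diffeomorphism, $\Phi^{-1}$ is smooth on all of $\R^n$, and $\R^n$ is convex. It therefore suffices to bound the Jacobian of $\Phi^{-1}$ uniformly and apply the same integral argument along segments in $\xi$-space. By the inverse function theorem and the block formula in \eqref{eq:th1_inv_f_th}, which relies on the orthogonality $\nabla_x^\top q\,\nabla_x h=0$,
\[
\nabla_\xi^\top\Phi^{-1}(\xi)=\begin{bmatrix}\nabla_x h(x)\,(\nabla_x^\top h(x)\nabla_x h(x))^{-1} & \nabla_x q(x)\,Q(x)^{-1}\end{bmatrix},\qquad x=\Phi^{-1}(\xi).
\]
I will bound the two blocks separately. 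Assumption~\ref{ass:Jrank} gives $\|(\nabla_x^\top h\nabla_x h)^{-1}\|\le 1/\underline m$, and the standing condition $Q\succeq\underline q I$ gives $\|Q^{-1}\|\le 1/\underline q$. A sharper estimate is also available: for $A=\nabla_x h$, the singular values of $A(A^\top A)^{-1}$ are reciprocals of those of $A$, so $\|A(A^\top A)^{-1}\|\le 1/\sqrt{\underline m}$, and similarly $\|\nabla_x q\,Q^{-1}\|\le 1/\sqrt{\underline q}$. Stacking the blocks then yields
\[
\|\nabla_\xi^\top\Phi^{-1}\|\le \sqrt{1/\underline m+1/\underline q},
\]
and this serves as $L_\Psi$. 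Notably, the upper bounds $B_h,B_q$ are not needed for this step.

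\emph{Main obstacle.} The delicate point is justifying the block-inverse formula, namely the existence of a $q$ with $\nabla_x^\top q\,\nabla_x h=0$ and $Q\succeq \underline q I$. This is part of the standing setup of Section~\ref{sec:FL}, so I will invoke it rather than reprove it. If orthogonality were not assumed, I would instead bound $\|(\nabla_x^\top\Phi)^{-1}\|$ through a lower bound on the smallest singular value of the stacked Jacobian. That requires a uniform angle condition between the row spaces of $\nabla_x^\top h$ and $\nabla_x^\top q$, and this is exactly what orthogonality supplies. I will note this dependence explicitly in the write-up.
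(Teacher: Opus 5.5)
Your proof is correct and follows the same route as the paper. For $\Phi$, both use a Jacobian bound plus the mean-value inequality; for $\Phi^{-1}$, both use the inverse function theorem with the orthogonality-based block-inverse formula, a uniform bound on the inverse Jacobian, and the mean-value inequality. The only difference is that you bound the blocks via singular values, obtaining $L_\Psi \le \sqrt{1/\underline m + 1/\underline q}$ without using $B_h, B_q$. This is valid and sharper than the paper's $B_h/\underline m + B_q/\underline q$.
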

\begin{proof}
    Under the considered assumptions, the Jacobian of $\Phi = [h(x)^\top,\, q(x)^\top]^\top$ satisfies
    \begin{equation*}
        \|\nabla_x^\top \Phi(x)\|^2 \leq \|\nabla_x h(x)\|^2 + \|\nabla_x q(x)\|^2 \leq B_h^2 + B_q^2.
    \end{equation*} 
    By the mean-value inequality,
    \begin{equation*}
    \|\Phi(x) - \Phi(y)\|  \leq  \sup_{z}\|\nabla_x^\top \Phi(z) \|\,\|x - y\|  \leq  L_\Phi \,\|x - y\|,
    \end{equation*} 
    for all $L_\Phi \leq \sqrt{B_h^2+B_q^2}$.\\
    For $\Phi^{-1}$, we use the inverse function theorem and the block-inverse formula established in the proof of Theorem~\ref{th:flcmo_ges}. Given $x = \Phi^{-1}(\xi)$, we have
    \begin{align*}
        \nabla_\xi^\top (\Phi^{-1})(\xi) &= (\nabla_x^\top \Phi(x))^{-1}  = \\
        &=\left[ \nabla_x h(x)\,H_i(x) ~~ \nabla_x q(x) Q(x)^{-1}\right]
    \end{align*}
    where $H_i(x) \doteq \left(\nabla_x^\top h(x) \nabla_x h(x) \right)^{-1}$. Then, considering the bounds on $Q(x), \|\nabla_x q(x)\|$ and that Assumption~\ref{ass:Jrank} implies the bound $\|H_i(x)\| \le 1/\underline{m}$, we obtain
    \begin{align*}
        & \|\nabla_\xi^\top (\Phi^{-1})(\xi) \| \leq \|\nabla_x h(x)\|\,\|H_i(x)\| + \\
        &\quad + \|\nabla_x q(x)\| \norm{Q(x)^{-1}} \leq \frac{B_h}{\underline{m}} + \frac{B_q}{\underline{q}}.
    \end{align*}
    Hence, by the mean-value inequality, $\Phi^{-1}$ is globally Lipschitz with constant $L_\Psi \leq B_h/\underline{m}+{B_q}/{\underline{q}}$.
\end{proof}

\begin{proposition}[Sufficient conditions for Assumption~\ref{ass:L1}]
    Let the assumptions in Proposition~\ref{prop:app2} hold and assume $\nabla_x f(x)$ is globally Lipschitz with constant $L_f$. Furthermore, let Assumptions~\ref{ass:Jrank} and \ref{ass:Lip_diffeo} hold. Then, there exists a constant $L_1>0$ such that the Lipschitz property~\eqref{eq:transverse_bound} holds.
\end{proposition}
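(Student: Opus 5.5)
The plan is to write the quantity in \eqref{eq:transverse_bound} as a difference of a product evaluated at two points, and to bound it by a constant times $\|\tilde x - p\|$. The key geometric observation is that $\tilde x = \Phi^{-1}(y,\eta)$ and $p = \Phi^{-1}(0,\eta)$ differ only in the $y$-coordinate. Assumption~\ref{ass:Lip_diffeo} (which Proposition~\ref{prop:app2} guarantees from the bounds $B_h$, $B_q$ and Assumption~\ref{ass:Jrank}) therefore gives
\begin{equation*}
\|\tilde x - p\| \le L_\Psi \left\|\begin{bmatrix} y \\ 0\end{bmatrix}\right\| = L_\Psi \|y\|.
\end{equation*}
Hence it suffices to show that $g(x) \doteq \nabla_x^\top q(x)\nabla_x f(x)$ is globally Lipschitz, say with constant $L_g$. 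One can then take $L_1 = L_g L_\Psi$.

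To bound $g(\tilde x)-g(p)$, I will add and subtract $\nabla_x^\top q(\tilde x)\nabla_x f(p)$. This gives the split
\begin{equation*}
g(\tilde x)-g(p) = \nabla_x^\top q(\tilde x)\bigl(\nabla_x f(\tilde x)-\nabla_x f(p)\bigr) + \bigl(\nabla_x q(\tilde x)-\nabla_x q(p)\bigr)^\top \nabla_x f(p).
\end{equation*}
The first term is routine. It is bounded by $B_q L_f \|\tilde x - p\|$, using $\|\nabla_x q\|\le B_q$ from Proposition~\ref{prop:app2} and the $L_f$-Lipschitz property of $\nabla_x f$.

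The second term is where I expect the main obstacle. Lipschitzness of $\nabla_x f$ alone does not control $\|\nabla_x f(p)\|$, which may grow linearly along the constraint manifold. Likewise, bounded $\nabla_x q$ does not by itself make $\nabla_x q$ Lipschitz. I will first try to exploit structure. I can decompose $\nabla_x f(p)$ into its component along $\operatorname{range}\nabla_x h(p)$ and its component along $\operatorname{range}\nabla_x q(p)$, which is $\nabla_x q(p)\,\nabla_\eta \tilde f(\eta)$ by the computation in Theorem~\ref{th:flcmo_ges}. The hope was that orthogonality $\nabla_x^\top q\,\nabla_x h = 0$ kills a piece. However, $\nabla_\eta\tilde f$ is itself unbounded under strong convexity, so this does not close the argument in general.

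I will therefore use the standing regularity discussed after Theorem~\ref{th:picmo_ges}: that $\nabla_x f$ is globally bounded, $\|\nabla_x f\|\le B_f$, and that $\nabla_x q$ is globally Lipschitz with some constant $L_q'$. The latter is inherited from Lipschitz $\nabla_x h$ when $q$ is built from $h$ as in~\cite{isi95}. Under these, the second term is bounded by $L_q' B_f\|\tilde x - p\|$. Combining both terms yields
\begin{equation*}
\|g(\tilde x)-g(p)\| \le (B_q L_f + L_q' B_f)\,L_\Psi\,\|y\|,
\end{equation*}
so $L_1 \doteq (B_q L_f + L_q' B_f)L_\Psi$ works. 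In a semiglobal setting restricted to a compact set, both $B_f$ and $L_q'$ exist automatically since $f,h\in C^2$, and the same estimate applies verbatim.
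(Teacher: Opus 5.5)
Your argument is the paper's proof: the same add-and-subtract split (you pivot on $\nabla_x^\top q(\tilde x)\nabla_x f(p)$, the paper on $\nabla_x^\top q(p)\nabla_x f(\tilde x)$), the same estimate $\|\tilde x-p\|\le L_\Psi\|y\|$, and the same constant $L_1=L_\Psi(B_qL_f+L_qB_f)$. You are right that this needs $\|\nabla_x f\|\le B_f$ and a Lipschitz $\nabla_x q$, neither of which is in the statement; the paper's proof uses both anyway, obtaining the second from a Lipschitz $\nabla_x h$ via a Lipschitz projector onto $\ker\nabla_x^\top h$ and a Kato-type Lipschitz orthonormal frame, whereas you simply assume it for a suitably constructed $q$.
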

\begin{proof}
    It is trivially shown that, under Assumption~\ref{ass:Jrank}, the orthogonal projector onto the null space of $\nabla_x^\top h(x)$ is globally Lipschitz whenever $\nabla_x^\top h(x)$ is globally Lipschitz. By standard perturbation results for invariant subspaces (see, e.g.~\cite{kato1966perturbation}), a Lipschitz family of orthogonal projectors with constant rank admits a Lipschitz orthonormal basis of its image. Therefore, $\nabla_x q(x)$ is globally Lipschitz with some constant $L_q$, i.e., 
    \begin{equation*}
       \| \nabla_x q(x_1) - \nabla_x q(x_2) \| \leq L_q \| x_1 - x_2 \|, \quad \forall x_1,x_2 \in \R^n.
    \end{equation*}
    Next, consider $C(\tilde x, p) \doteq \nabla_x^\top q(\tilde x) \nabla_x f(\tilde x) - \nabla_x^\top q(p) \nabla_x f(p)$, where $\tilde x = \Phi^{-1}(y,\eta)$ and $p = \Phi^{-1}(0,\eta)$. Using the triangle inequality:
    \begin{align}\label{eq:appendix_suff_ass4_defC}
        C(\tilde x, p) = D_q(\tilde x, p) \nabla_x f(\tilde x) + \nabla_x^\top q(p) D_f(\tilde x, p),
    \end{align}
    with $D_q(\tilde x, p) = \nabla_x^\top q(\tilde x) - \nabla_x^\top q(p)$ and  $D_f(\tilde x, p) = \nabla_x f(\tilde x) - \nabla_x f(p)$.
    We bound each term of $C(\tilde x, p)$ in Eq.~\eqref{eq:appendix_suff_ass4_defC} separately: 
    \begin{itemize}
        \item for the first one, we have
        \begin{equation}
            \|D_q(\tilde x, p) \nabla_x f(\tilde x) \| \leq L_q\,B_f\,\|\tilde x - p\|
        \end{equation}
        by using the global bound $\|\nabla_x f(\tilde x)\| \le B_f$ and the Lipschitz property of $\nabla_x q(x)$. 
        \item for the second one, we have
        \begin{equation}
            \| \nabla_x^\top q(p)\, D_f(\tilde x, p) \| \leq B_q L_f\,\|\tilde x - p\|
        \end{equation}
        by using the Lipschitz property of $\nabla f(x)$ and $\|\nabla_x q(x)\| \leq B_q$.
    \end{itemize}
    Adding the two bounds, we get
    \begin{equation}\label{eq:ass4_sufficient_quasi}
        \norm{ C(\tilde x, p) } \leq (B_q L_f + L_q B_f) \norm{\tilde x - p}
    \end{equation}
    Finally, since \begin{equation}\label{eq:xtilde_p_y}
    \|\tilde x - p\| = \|\Phi^{-1}(y,\eta) - \Phi^{-1}(0,\eta)\|     \leq L_\Psi\,\|y\|
    \end{equation}
    by Assumption~\ref{ass:Lip_diffeo}, we obtain the result with $L_1 = L_\Psi(L_q B_f + B_q L_f)$ by combining Eqs.~\eqref{eq:ass4_sufficient_quasi} and \eqref{eq:xtilde_p_y}.
\end{proof}

\begin{proposition}[Sufficient conditions for Assumption~\ref{ass:lip_l}]
\label{prop:global_sufficient}
    Let Assumption~\ref{ass:Jrank} hold, and suppose there exist constants
    $B_f, B_h > 0$ such that, for all $x \in \R^n$, $\|\nabla_x f(x)\| \leq B_f$, $\|\nabla_x h(x)\| \leq B_h$. Further assume that $\nabla_x f(x), \nabla_x h(x)$ are Lipschitz continuous with constants $L_f$ and $L_h$, respectively.
    Then, Assumption~\ref{ass:lip_l} holds with
    \begin{equation}\label{eq:L2_explicit}
      L_2 \leq  \frac{B_h M_f + M_h B_f}{\underline{m}} + \frac{2\,B_h^2 M_h B_f}{\underline{m}^2}.
    \end{equation}
\end{proposition}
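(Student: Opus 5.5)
Write $A(x) \doteq \nabla_x^\top h(x)\nabla_x h(x)$ and $g(x)\doteq \nabla_x^\top h(x)\nabla_x f(x)$, so that $\varphi(x) = -A(x)^{-1} g(x)$. The constants $M_f$ and $M_h$ in~\eqref{eq:L2_explicit} are read as the Lipschitz constants of $\nabla_x f$ and $\nabla_x h$, i.e., $M_f = L_f$ and $M_h = L_h$. The plan is to add and subtract a mixed term and bound each piece with uniform bounds and Lipschitz constants. For $x_1,x_2\in\R^n$,
\begin{equation*}
\varphi(x_2)-\varphi(x_1) = A(x_1)^{-1}\bigl(g(x_1)-g(x_2)\bigr) + \bigl(A(x_1)^{-1}-A(x_2)^{-1}\bigr) g(x_2).
\end{equation*}
Assumption~\ref{ass:Jrank} gives $\|A(x)^{-1}\|\le 1/\underline m$ for every $x$.

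For the first term, split $g(x_1)-g(x_2)$ again by adding and subtracting $\nabla_x^\top h(x_1)\nabla_x f(x_2)$:
\begin{equation*}
g(x_1)-g(x_2) = \nabla_x^\top h(x_1)\bigl(\nabla_x f(x_1)-\nabla_x f(x_2)\bigr) + \bigl(\nabla_x h(x_1)-\nabla_x h(x_2)\bigr)^\top \nabla_x f(x_2).
\end{equation*}
Norm bounds give $\|g(x_1)-g(x_2)\|\le (B_h M_f + M_h B_f)\|x_1-x_2\|$. Multiplying by $1/\underline m$ produces the first summand in~\eqref{eq:L2_explicit}.

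For the second term, use the resolvent identity $A_1^{-1}-A_2^{-1} = A_1^{-1}(A_2-A_1)A_2^{-1}$, which gives $\|A_1^{-1}-A_2^{-1}\|\le \|A_1-A_2\|/\underline m^2$. Bound $\|A(x_1)-A(x_2)\|$ with the same add-and-subtract trick:
\begin{equation*}
A(x_1)-A(x_2) = \nabla_x^\top h(x_1)\bigl(\nabla_x h(x_1)-\nabla_x h(x_2)\bigr) + \bigl(\nabla_x h(x_1)-\nabla_x h(x_2)\bigr)^\top\nabla_x h(x_2).
\end{equation*}
This gives $\|A(x_1)-A(x_2)\|\le 2B_hM_h\|x_1-x_2\|$. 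With $\|g(x_2)\|\le B_hB_f$, the second term is bounded by $2B_h^2M_hB_f\|x_1-x_2\|/\underline m^2$, which is the second summand. Adding the two bounds yields Assumption~\ref{ass:lip_l} with $L_2$ as in~\eqref{eq:L2_explicit}.

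The main obstacle is the matrix-inverse perturbation step. The global Lipschitz property of $A(x)^{-1}$ depends on the uniform lower bound $\underline m$ from Assumption~\ref{ass:Jrank}, which controls both $\|A(x_1)^{-1}\|$ and $\|A(x_2)^{-1}\|$. It also relies on the fact that the induced norm of $\nabla_x^\top h$ equals that of $\nabla_x h$, so that $B_h$ bounds both. Once the resolvent identity is in place, the remaining steps are routine triangle-inequality estimates.
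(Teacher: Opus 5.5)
Your proposal is correct and follows the paper's proof essentially step for step. It uses the same split of $\varphi(x_1)-\varphi(x_2)$ into an inverse-perturbation term and a $g$-difference term, the resolvent identity with $\|A(x)^{-1}\|\le 1/\underline m$, and the same add-and-subtract bounds, which give $\|A(x_1)-A(x_2)\|\le 2B_hM_h\|x_1-x_2\|$ and $\|g(x_1)-g(x_2)\|\le (B_hM_f+M_hB_f)\|x_1-x_2\|$. Your reading $M_f=L_f$, $M_h=L_h$ is also how the paper's proof uses these constants; the $M$'s in the statement are a notational slip.
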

 
\begin{proof}
    Define $H(x) \doteq \nabla_x^\top h(x) \nabla_x h(x)$. By Assumption~\ref{ass:Jrank} we have $\underline{m} I \preceq H(x)$ and by $\|\nabla_x h(x)\| \le B_h$ it holds $\|H(x)\|\le B_h^2$. For arbitrary $x_1,x_2\in\R^n$, we can write
    \begin{subequations} \begin{align}
        &\varphi(x_1)-\varphi(x_2) = \\
        &= -\bigl[H(x_1)^{-1}\!-\!H(x_2)^{-1}\bigr]\nabla_x^\top h(x_2) \nabla_x f(x_2) + \label{eq:prop_suff6_diffphi1} \\
        &-\! H(x_1)^{-1}\bigl[\nabla_x^\top h(x_1) \nabla_x f(x_1) \!-\! \nabla_x^\top  h(x_2) \nabla_x f(x_2)\bigr]. \label{eq:prop_suff6_diffphi2}
    \end{align}\end{subequations}
    For the term in Eq.~\eqref{eq:prop_suff6_diffphi1}, we use the resolvent identity
    $H(x_1)^{-1}-H(x_2)^{-1}=H(x_1)^{-1}(H(x_2)-H(x_1))H(x_2)^{-1}$ to obtain
    \begin{equation}
        \|H(x_1)^{-1}-H(x_2)^{-1}\| \leq \frac{\|H(x_1)-H(x_2)\|}{\underline{m}^2}.
    \end{equation}
    To bound $\|H(x_1)-H(x_2)\|$, we add and subtract
    $\nabla_x^\top h(x_1) \nabla_x h(x_2)$, obtaining:
    \begin{equation}\begin{aligned}
        H(x_1)-&H(x_2) = \nabla_x^\top h(x_1) \bigl[\nabla_x h(x)-\nabla_x h(x_2)\bigr] +\\
        +&\bigl[\nabla_x h(x_1)-\nabla_x h(x_2)\bigr]^\top \nabla_x h(x_2).
    \end{aligned}\end{equation}
    Then, using $\|\nabla_x h(x_1)\|,\,\|\nabla_x h(x_2)\|\le B_h$, we get the bound
    \begin{subequations}\begin{align}
        \|H(x_1)-H(x_2)\| &\leq 2\,B_h\,\|\nabla_x h(x_1)-\nabla_x h(x_2)\| \\ &\leq 2\,B_h\,L_h\,\|x_1-x_2\|.
    \end{align}\end{subequations}
    Next, using $\|\nabla_x^\top h(x_2) \nabla_x f(x_2)\|\leq B_h B_f$, we conclude that the term in Eq.~\eqref{eq:prop_suff6_diffphi1} is Lipschitz with constant   $({2\,B_h^2\,M_h\,B_f})/{\underline{m}^2}$.
    Next, we consider the term in Eq.~\eqref{eq:prop_suff6_diffphi2}. Using the equality
    \begin{subequations}
    \begin{align}
        &\nabla_x^\top h(x_1) \nabla_x f(x_1) -\nabla_x^\top h(x_2) \nabla_x f(x_2) =\\
        &=\nabla^\top_x h(x_1) D_f(x_1,x_2) +D_h(x_1,x_2) \nabla_x f(x_2).
    \end{align}\end{subequations}
    where 
    \begin{subequations}\begin{align}
        D_f(x_1,x_2) &\doteq \nabla_x f(x_1) - \nabla_x f(x_2)\\
        D_h(x_1,x_2) &\doteq \nabla_x^\top h(x_1) - \nabla^\top_x h(x_2),
    \end{align} \end{subequations}
    with 
    \begin{subequations}\begin{align}
        \norm{D_f(x_1,x_2)} \leq L_f \norm{x_1-x_2}, \\ \norm{D_f(x_1,x_2)} \leq L_h \norm{x_1-x_2},
    \end{align}\end{subequations}
    we obtain
    \begin{equation} \begin{aligned}
        &\|\nabla_x^\top h(x_1) \nabla_x f(x_1)-\nabla_x^\top h(x_2) \nabla_x f(x_2)\| \leq\\
        &(B_h M_f+M_h B_f)\|x_1-x_2\|.
    \end{aligned} \end{equation}
    Considering also the bound $\|H(x)^{-1}\| \leq 1/\underline{m}$ the term in Eq.~\eqref{eq:prop_suff6_diffphi2} is Lipschitz with constant $({B_h M_f+M_h B_f})/({\underline{m}})$.
    Adding the constants on the two terms yields~\eqref{eq:L2_explicit}.
\end{proof}

\begin{remark}
The constants $L_1$ and $L_2$ in
Proposition~\ref{prop:global_sufficient} are \emph{explicitly depending on the
problem data} $B_f, B_h, L_f, L_h, \underline{m}$, consistently with
the explicit threshold $k_p^\star$ derived in
Theorem~\ref{th:explicit_threshold}: one may substitute these
expressions into~\eqref{eq:kappa} to obtain $\kappa$ purely in terms
of first and second derivatives of $f$ and $h$.
\end{remark}

\end{document}